\newtheorem{theorem}{Theorem}[section]
\newtheorem*{theorem*}{Theorem}
\newtheorem{corollary}[theorem]{Corollary}
\newtheorem{proposition}[theorem]{Proposition}
\newtheorem{conjecture}[theorem]{Conjecture}
\newtheorem{lemma}[theorem]{Lemma}
\newtheorem{problem}[theorem]{Problem}
\theoremstyle{definition}
\newtheorem{definition}[theorem]{Definition}
\newtheorem{example}[theorem]{Example}
\newcommand{\bz}{\mathbf{2}} %{\underline{2}} --doesn't work in tableaux very well
\newcommand{\bd}{\mathbf{3}} %{\underline{3}}
\newcommand{\N}{\mathbb{N}}
\newcommand{\Z}{\mathbb{Z}}
\newcommand{\R}{R}
\newcommand{\CC}{S}
\renewcommand{\epsilon}{\varepsilon}
\DeclareMathOperator{\Sym}{Sym}
\DeclareMathOperator{\sgn}{sgn}
\DeclareMathOperator{\height}{ht}
\renewcommand{\theta}{\vartheta}
\newcommand{\id}{\mathrm{id}}
\newcounter{thmlistcnt}
\newenvironment{thmlist}%
	{\setcounter{thmlistcnt}{0}%
	\begin{list}{\emph{(\roman{thmlistcnt})}}{%
		\usecounter{thmlistcnt}%
		\setlength{\topsep}{0pt}%
		\setlength{\leftmargin}{0pt}%
		\setlength{\itemsep}{0pt}%
		\setlength{\labelwidth}{17pt}
		\setlength{\itemindent}{30pt}}%
	}%
	{\end{list}}%
\newcommand{\nur}{{\bigl( \nu(0), \ldots, \nu(r-1) \bigr)}}
\newcommand{\nub}{{\pmb{\nu}}}
\newcommand{\sigmab}{{\pmb{\sigma}}}
\newcommand{\taub}{{\pmb{\tau}}}
\newcommand{\nurns}{{(\nu(0),\ldots,\nu(r-1))}}
\newcommand{\Pnu}{\nub^\star}
\newcommand{\RT}{\mathrm{RT}}
\newcommand{\w}{\mathrm{w}}
\newcommand{\rRTg}[1]{\text{$#1$-}\mathrm{RT}}
\newcommand{\rRT}{\rRTg{r}}
\newcommand{\RSYT}{\mathrm{RSYT}}
\newcommand{\SSYT}{\mathrm{SSYT}}
\newcommand{\SSYTL}{\mathrm{SSYTL}}
\newcommand{\RNT}{\mathrm{RNT}}
\newcommand{\RSYTL}{\mathrm{RSYTL}}
\newcommand{\mSSYT}{\mathbf{SSYT}}
\newcommand{\mSSYTL}{\mathbf{SSYTL}}
\newcommand{\cont}{\mathrm{cont}}
\newcommand{\T}{\mathrm{T}}
\newcommand{\negvthinspace}{\hskip-0.75pt}
\newcommand{\vthinspace}{\hskip0.25pt}
\newcommand{\mbf}[1]{\boldsymbol{#1}}
\newcommand{\rhonu}{\nu}
\newcommand{\alphaepsilon}{\epsilon}
\newcommand{\sigmanu}{\nu}
\newcommand{\sigmanub}{\nub}
\subjclass[2010]{05E05, secondary: 05E10}
\begin{document}

\begin{abstract}
This paper proves a
 combinatorial rule expressing the product $s_\tau(s_{\lambda/\mu} \circ p_r)$
of a Schur function and the 
plethysm of a skew Schur function with a power sum symmetric function
as an integral linear combination of Schur functions. This generalizes
the SXP rule for the plethysm $s_\lambda \circ p_r$.
Each step in the proof uses either an explicit bijection or a sign-reversing
involution. The proof is inspired by an earlier proof of the SXP rule due to Remmel and
Shimozono, \emph{A simple proof of the Littlewood--Richardson rule and applications}, 
Discrete Mathematics \textbf{193} (1998) 257--266. 
The connections with two 
later combinatorial rules for special cases of this plethysm are discussed. Two open problems
are raised. The paper is intended to be readable by
non-experts. 
%The connections with two other combinatorial rules for this plethysm are 
%discussed in the final section and an open problem is suggested.
\end{abstract}

\author{Mark Wildon}
\title[A generalized
SXP rule]{A generalized
SXP rule proved by bijections and involutions}
\date{\today}

\maketitle
\thispagestyle{empty}

\section{Introduction}

Let $f \circ g$ denote the plethysm of the symmetric functions $f$ and $g$.
%Let $s_\lambda$ denote the Schur functions labelled by the partition $\lambda$
%and let $p_r$ denote the power sum symmetric function for $r \in \N$.
While it remains a hard problem to express an
arbitrary plethysm as an integral linear combination of Schur functions, many results
are known in special cases. In particular, the SXP rule, first proved in 
\cite[page~351]{LittlewoodModular} and later, in a different way, in
\cite[pages~135--140]{ChenGarsiaRemmel}, gives a surprisingly simple formula for the
plethysm $s_\lambda \circ p_r$ where $s_\lambda$ is the Schur function for
 the partition $\lambda$
and $p_r$ is the power sum symmetric function for $r \in \N$. 
It states that %if $\lambda$ is a partition of $n$ then
\begin{equation}
\label{eq:SXP}  s_\lambda \circ p_r = \sum_{\nub}\sgn_r (\Pnu) %_\varnothing)  
c^\lambda_{\nub}\hskip0.5pt %\nurns} 
%\bigl( P(\nub) \bigr) 
s_{\Pnu} %_\varnothing}
\end{equation}
where the sum is over all $r$-multipartitions $\nub = \nur$ of $n$, 
$\Pnu$ %_\varnothing$ 
is the partition with empty $r$-core and $r$-quotient $\nub$,  
$\sgn_r (\Pnu) %\bigl( P(\nub) \bigr) 
\in \{+1,-1\}$ is as defined in \S2 below, and $c^\lambda_\nub = c^\lambda_\nurns$
is a generalized Littlewood--Richardson coefficient, as defined at the end of \S\ref{sec:LS} below.

In this note we prove a generalization of the
SXP rule. The following definition is required: say that
the pair of $r$-multipartitions $(\nub, \taub)$, denoted $\nub/\taub$,
%denoted $\nub / \taub$,
is a \emph{skew $r$-multipartition}
of $n$ if $\nu(i)/\tau(i)$ is a skew partition for each $i \in \{0,\ldots, r-1\}$, 
and $n = \sum_{i=0}^{r-1} \bigl( |\nu(i)| - |\tau(i)| \bigr)$. 
%We denote such a pair by $\nub / \taub$.

\begin{theorem}\label{thm:main}
Let $r \in \N$,
let $\tau$ be a partition with $r$-quotient $\taub$, and % = (\tau(0),\ldots, \tau(r-1))$,
let $\lambda / \mu$ be a skew partition of $n$. Then
\[ s_\tau(s_{\lambda / \mu} \circ p_r) = \sum_{\nub}\sgn_r \bigl((\nub/\taub, \tau)^\star\bigr) 
c^\lambda_{\nub/\taub \vthinspace:\vthinspace \mu} 
 %\bigl( P(\nub) \bigr) 
s_{(\nub/\taub, \tau)^\star} \]
where the sum is over all $r$-multipartitions $\nub$
such that  $\nub / \taub$ is a skew $r$-multipartition of~$n$, $(\nub/\taub,\tau)^\star$ is the partition,
defined formally in Definition~\ref{def:star},
obtained from $\tau$ by adding $r$-hooks in the way specified by $\nub / \taub$, and
$\nub/\taub : \mu$ is the skew $(r+1)$-multipartition obtained from $\nub/\taub$ by appending $\mu$.
\end{theorem}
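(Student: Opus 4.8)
\smallskip

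The plan is to reduce Theorem~\ref{thm:main} to one numerical identity per partition $\kappa$, and to prove that identity by a sign-reversing involution in the spirit of Remmel--Shimozono. Pairing both sides with $s_\kappa$ under the Hall inner product and using the skewing adjunction $\langle s_\tau F,s_\kappa\rangle=\langle F,s_{\kappa/\tau}\rangle$ (itself a bijection at the level of tableaux), the coefficient of $s_\kappa$ on the left is $\langle s_{\lambda/\mu}\circ p_r,\,s_{\kappa/\tau}\rangle$; expanding $s_{\lambda/\mu}=\sum_\nu c^\lambda_{\mu\nu}s_\nu$, applying the SXP rule~\eqref{eq:SXP} to each $s_\nu\circ p_r$, and using $\sum_\nu c^\lambda_{\mu\nu}c^\nu_{\sigmab}=c^\lambda_{\sigmab:\mu}$ (the coefficient of $s_\lambda$ in $s_\mu\prod_i s_{\sigma(i)}$, i.e.\ the generalized Littlewood--Richardson coefficient of the straight-shape analogue of $\nub/\taub:\mu$), this coefficient becomes $\sum_{\sigmab}\sgn_r(\sigmab^\star)\,c^\lambda_{\sigmab:\mu}\,c^\kappa_{\tau,\sigmab^\star}$, the sum over $r$-multipartitions $\sigmab$. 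On the right of Theorem~\ref{thm:main} the coefficient of $s_\kappa$ is $\sgn_r(\kappa)\,c^\lambda_{\nub/\taub:\mu}$ when $\kappa=(\nub/\taub,\tau)^\star$ for some $\nub$, and $0$ otherwise --- and here $\nub$ is unique when it exists, because the $r$-hooks added to $\tau$ are read off from the bead positions of $\kappa$ and of $\tau$ on a common abacus, so $\nub\mapsto(\nub/\taub,\tau)^\star$ is injective and the asserted Schur expansion is well posed. It therefore suffices to prove, for every $\kappa$,
\[
\sum_{\sigmab}\sgn_r(\sigmab^\star)\,c^\lambda_{\sigmab:\mu}\,c^\kappa_{\tau,\sigmab^\star}
=\begin{cases}\sgn_r(\kappa)\,c^\lambda_{\nub/\taub:\mu}&\text{if }\kappa=(\nub/\taub,\tau)^\star,\\[1pt] 0&\text{otherwise.}\end{cases}
\]

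For the combinatorial model I would read $c^\lambda_{\sigmab:\mu}$ as a count of chains of Littlewood--Richardson fillings that add $\sigma(0),\dots,\sigma(r-1)$ and then $\mu$ to reach $\lambda$; read $c^\kappa_{\tau,\sigmab^\star}$ as a count of Littlewood--Richardson skew tableaux of shape $\kappa/\tau$ and content $\sigmab^\star$; and encode $\sgn_r(\sigmab^\star)$ on the James abacus as $(-1)$ to the number of crossings among the bead-moves that build $\sigmab^\star$ from the abacus of the empty partition, exactly as in \S2. With these substitutions the left side becomes a signed enumeration of pairs consisting of a ``ribbon-type'' filling of $\kappa/\tau$ that records where $r$-hooks are laid on $\tau$, together with the Littlewood--Richardson data carrying $\mu$ to $\lambda$ through the quotient shapes. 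The structural input is the classical abacus dictionary: $\kappa/\tau$ admits such a filling exactly when $\kappa$ and $\tau$ have equal $r$-cores and the $r$-quotient of $\kappa$ contains $\taub$ componentwise --- exactly the condition $\kappa=(\nub/\taub,\tau)^\star$ --- and, in that case, the filling data of $\kappa/\tau$ splits freely across the $r$ runners into an $r$-tuple of semistandard fillings of the skew shapes $\nu(i)/\tau(i)$.

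The heart of the proof, and the step I expect to be the main obstacle, is a weight-preserving sign-reversing involution on the ``defective'' pairs --- those whose ribbon-type filling fails to describe disjoint, correctly ordered $r$-hook additions on $\tau$ (equivalently, two would-be ribbons overlap or are transposed, equivalently some $\tau(i)\not\subseteq\nu(i)$). Following Remmel--Shimozono, I would locate the first offending pair of ribbons, interchange them, carry the content and the Littlewood--Richardson data across unchanged, and check that this flips exactly one ribbon-height parity (hence the global abacus sign) and is a fixed-point-free involution on the defective set, so those terms cancel. The survivors are the pairs with $\kappa=(\nub/\taub,\tau)^\star$; for them, replacing the summation variable $\sigma(i)$ by the skew shape $\nu(i)/\tau(i)$ converts $\sum_{\sigmab}c^\lambda_{\sigmab:\mu}\prod_i\#\SSYT(\nu(i)/\tau(i))$ into $c^\lambda_{\nub/\taub:\mu}$, while the residual sign collapses to $\sgn_r(\kappa)$; summing over $\kappa$ gives the theorem (one could instead run the argument from scratch, without invoking the SXP rule, which would reprove it as the case $\tau=\emptyset$, $\mu=\emptyset$). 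The delicate point is that with $\tau$ of arbitrary $r$-core the ribbons no longer ``start from the corner'' as when $\tau=\emptyset$, the plethysand is itself skew, and the sign is global on the abacus rather than local on cells; so producing one ``transpose two ribbons'' rule that at once preserves $\kappa$, $\tau$, $\mu$ and all lattice conditions, is an involution on the nose rather than merely after re-sorting, never collides with the transported data, and has fixed-point set precisely what $\sgn_r(\kappa)\,c^\lambda_{\nub/\taub:\mu}$ counts --- together with the \S2 abacus bookkeeping identifying the residual sign with $\sgn_r\bigl((\nub/\taub,\tau)^\star\bigr)$ --- is where the real work will lie; the remaining steps are bijections of the routine ``unwind the definitions'' kind.
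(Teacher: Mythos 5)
Your reduction is algebraically sound, but it assumes both the Littlewood--Richardson rule and the SXP rule~\eqref{eq:SXP} (two inputs the paper deliberately avoids), and, more seriously, it stops exactly where the content of the theorem lies. The identity you isolate,
\[
\sum_{\sigmab}\sgn_r(\sigmab^\star)\,c^\lambda_{\sigmab\,:\,\mu}\,c^\kappa_{\tau,\sigmab^\star}
=\begin{cases}\sgn_r(\kappa/\tau)\,c^\lambda_{\nub/\taub\,:\,\mu}&\text{if }\kappa=(\nub/\taub,\tau)^\star,\\ 0&\text{otherwise}\end{cases}
\]
(note the sign must be $\sgn_r(\kappa/\tau)$, not $\sgn_r(\kappa/\varnothing)$), is, once the LR algebra you have already assumed is stripped away, precisely the adjointness relation of Kerber--S\"anger--Wagner recorded as~\eqref{eq:padjoint}; given SXP it carries essentially the whole theorem, and you do not prove it. The sketched involution --- ``locate the first offending pair of ribbons and interchange them'' --- is not attached to any concrete model of the summands: a term of your sum is a pair consisting of LR data witnessing $c^\lambda_{\sigmab\,:\,\mu}$ together with a latticed semistandard tableau of shape $\kappa/\tau$ and \emph{content} $\sigmab^\star$, and such a tableau has no ribbons in it; moreover the cancellation you need is between terms with different contents $\sigmab^\star$ (different $\sigmab$), not a transposition of two border strips inside one object of fixed shape and weight, so ``interchange the offending ribbons'' has no referent. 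To carry out your plan you would have to (a) translate LR tableaux of shape $\kappa/\tau$ and content $\sigmab^\star$ into abacus/ribbon data in a way compatible with the sign $\sgn_r(\sigmab^\star)$ (Lemma~\ref{lemma:ribboneqv} does not do this: it concerns horizontal ribbon strips, i.e.\ $h_\alpha\circ p_r$, not Schur-function contents), (b) define the involution and check it transports the $c^\lambda_{\sigmab\,:\,\mu}$-data and reverses the sign, and (c) biject the fixed points with $\mSSYTL(\nub/\taub:\mu,\lambda)$. None of this is done, and your ``survivor'' step is misstated: the relevant identity is $\sum_{\sigmab}c^\lambda_{\sigmab\,:\,\mu}\prod_i\langle s_{\tau(i)}s_{\sigma(i)},s_{\nu(i)}\rangle=c^\lambda_{\nub/\taub\,:\,\mu}$, not a count of $\SSYT(\nu(i)/\tau(i))$, and it too needs an argument of the type in Lemma~\ref{lemma:assoc}.

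For comparison, the paper works on the other side of the duality: Jacobi--Trudi~\eqref{eq:JacobiTrudi} replaces $s_{\lambda/\mu}$ by a signed sum of $h_{g\cdot\lambda-\mu}$, Proposition~\ref{prop:bs} expands each $s_\tau(h_{g\cdot\lambda-\mu}\circ p_r)$ into $r$-ribbon tableaux, the abacus bijection of Lemma~\ref{lemma:ribboneqv} converts these into skew $r$-multitableaux on the $r$-quotient, and the cancellation is then performed by the generalized Lascoux--Sch\"utzenberger involution (Proposition~\ref{prop:LS}) with the shape $\nub/\taub:\mu$ fixed and only the content $g\cdot\lambda-\mu$ varying --- exactly the situation in which the coplactic operators $E_k,F_k,S_k$ exist; that is why neither the LR rule nor SXP is needed there. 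In your dual formulation the objects have fixed shape $\kappa/\tau$ but varying partition content $\sigmab^\star$, for which no off-the-shelf involution of Remmel--Shimozono type is available, so the ``real work'' you defer is genuinely new work, not a routine adaptation. Alternatively, you could replace the missing involution by citing~\eqref{eq:padjoint} from \cite{KSW}; that would complete a correct proof, but it is then a short algebraic deduction from SXP, LR and the KSW adjointness --- a different and non-self-contained argument, not the bijective/involutive proof the paper constructs.
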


Each step in the proof %, given in \S\ref{sec:proof} below, 
uses either an explicit bijection or a sign-reversing involution on suitable sets of tableaux.
%The first step uses the Jacobi--Trudi formula, which has a beautiful involutive proof
%given in \cite[page 342]{StanleyII}.
The critical second step uses a special case of a rule for multiplying a Schur function by the plethysm
$h_\alpha \,\circ\, p_r$, where~$h_\alpha$ is the complete symmetric function for
 the composition~$\alpha$.
This rule was first proved in \cite[page 29]{DesarmenienLeclercThibon} and is stated here
as Proposition~\ref{prop:bs}. A reader familiar with the basic results on symmetric functions
and willing to assume this rule should find the proof largely self-contained.
 In particular,  we do not assume the Littlewood--Richardson rule.
We show in \S\ref{subsec:nonPlethystic} that
two versions of the Littlewood--Richardson rule follow from Theorem~\ref{thm:main} by setting \hbox{$r=1$}
and taking either $\tau$ or $\mu$ to be the empty partition.
The penultimate step in our proof is~\eqref{eq:afterInvol}, which restates
Theorem~\ref{thm:main} in a form free from explicit
Littlewood--Richardson coefficients. 
%and specializing $\tau$ and $\mu$ 
In \S\ref{subsec:plethystic} we discuss the connections with other combinatorial rules
for plethysms of the type in Theorem~\ref{thm:main}, including the domino tableaux
rule for $s_\tau(s_\lambda \circ p_2)$ proved in~\cite{CarreLeclerc}.

%;
%alternatively, assuming the Littlewood--Richardson rule, it is routine
%to deduce Theorem~\ref{thm:main} from~\eqref{eq:SXP}.

An earlier proof of both the Littlewood--Richardson rule and  the SXP rule,
as stated in~\eqref{eq:SXP}, was given by Remmel and
Shimozono in \cite{RemmelShimozono}, using a involution on semistandard skew tableaux
defined by Lascoux and
Sch{\"u}tzenberger in \cite{LascouxSchutzenberger}. The proof given here uses
the generalization of this involution to tuples of semistandard tableaux of skew shape. 
We include full details to make the paper self-contained, while admitting that this
generalization is implicit in \cite{LascouxSchutzenberger} and \cite{RemmelShimozono}, since,
as illustrated after Example~\ref{ex:invol}, a tuple of skew tableaux
may be identified (in a slightly artificial way) with a single skew tableau.
%, given in \S 3 below.
The significant departure from the 
proof in~\cite{RemmelShimozono} is that we 
replace monomial symmetric functions with complete
symmetric functions. This dualization requires different ideas. It appears to offer
some simplifications, as well as leading to a more general result. 
%requires some new ideas and leads to
%some  simplifications. 

The plethysm operation $\circ$ is defined
in \cite[\S 2.3]{LoehrRemmel}, or, with minor changes in notation, in
\cite[I.8]{MacDonald}, \cite[A2.6]{StanleyII}. For plethysms of the form $f \circ p_r$
the definition can be given in a simple way: write $f$ as a formal infinite
sum of monomials in the variables $x_1, x_2, \ldots $ and substitute $x_i^r$ for each $x_i$
to obtain $f \circ p_r$. For example, $s_{(2)} \circ p_2 = x_1^4 + x_2^4 + x_3^4 + \cdots + x_1^2x_2^2 + x_1^2x_3^2
+ x_2^2x_3^2 + \cdots =
s_{(4)} - s_{(3,1)} + s_{(2,2)}$. By \cite[page 167, P1]{LoehrRemmel},
$f \circ p_r = p_r \circ f$;
several of the formulae we use are stated in the literature in this equivalent form.

\subsubsection*{Outline}
The necessary background results on quotients of skew partitions and ribbon tableaux are given in \S\ref{sec:background} below, where we also
recall
the plethystic Murnaghan--Nakayama rule and the Jacobi--Trudi formula.
%; these are used in the first
%and second steps of the proof. 
In~\S\ref{sec:LS}
we give a generalization of the Lascoux--Sch{\"u}tzenberger involution and define
the generalized Littlewood--Richardson coefficients appearing in Theorem~\ref{thm:main}. The proof
of Theorem~\ref{thm:main} is then given in \S\ref{sec:proof}. 
An example is given in~\S\ref{sec:example}. 
Further examples and connections with other
combinatorial rules are given in \S\ref{sec:SXPp}. In particular we deduce 
the Littlewood--Richardson rule as stated in \cite[Definition~16.1]{James} and, originally, 
in \cite[Theorem~III]{LittlewoodRichardson}. In the appendix we prove a `shape-content' involution
that implies the version of the Littlewood--Richardson rule proved in \cite{StembridgeLR},
and  a technical result motivating Conjecture~\ref{conj:RNT}.

%In \S\ref{sec:proof} we present the proof in `one-line' form. We then
%give the bijections  and involutions that prove  each step.

\section{Prerequisites on $r$-quotients, ribbons and tableaux}
\label{sec:background}

We assume the reader is familiar with partitions, skew partitions and border strips,
as defined in \cite[Chapter 7]{StanleyII}. 
Fix $r \in \N$ throughout this section.
We represent partitions using an $r$-runner abacus, as defined in \cite[page 78]{JK},
on which the number of beads is always a multiple of $r$;
the $r$-quotient of a partition is then unambiguously defined by \cite[2.7.29]{JK}.
(See \S\ref{subsec:plethystic} for a remark on this convention.)
%For $m \in \N$, we define a \emph{skew $m$-multipartition} of $n \in \N_0$ to be a tuple
%$\bigl( \nu(0)/\tau(0), \ldots, \nu(m-1)/\tau(m-1) \bigr)$ of skew partitions
%of total size $n$. 
%When $\tau(i)$ is the empty partition, 
%we write $\nu(i)$ rather than $\nu(i)/\varnothing$.
%(Where a skew $m$-multipartition corresponds to the runners of
%an abacus, we choose to number the components from $0$.)
%For further background on these objects we refer the reader to \cite{JK}
%or \cite{WildonPlethysticMN}. 
The further unnumbered definitions below 
are taken from \cite[page 28]{DesarmenienLeclercThibon}, \cite[\S 3]{EvseevPagetWildon}
and \cite[\S 3]{RemmelShimozono}, and are included to make this note self-contained.

\subsubsection*{Signs and quotients of skew partitions}
Let $n \in \N_0$ and let $\nu /\tau$ be a skew partition of $rn$.
 We say that $\nu / \tau$ is \emph{$r$-decomposable} 
if there exist partitions 
\[  \tau = \sigma^{(0)} \subset \sigma^{(1)} \subset \ldots \subset \sigma^{(n)} = \nu \]
such that $\sigma^{(j)}/\sigma^{(j-1)}$ is a border strip of size $r$
(also called an $r$-border strip) for each $j \in \{1,\ldots,n\}$. 
In this case we define the \emph{$r$-sign} of $\nu/\tau$ by
\[ \sgn_r(\nu/\tau) = \prod_{i=1}^n (-1)^{\height(\sigma^{(j)}/\sigma^{(j-1)})}. \]
(Here $\height(\sigma^{(j)}/\sigma^{(j-1)})$ is the height of the border strip $\sigma^{(j)}/\sigma^{(j-1)}$, defined to be one less than the number of rows of $\sigma^{(j)}$ that it meets.)
By \cite[2.7.26]{JK} or \cite[Proposition~3]{WildonPlethysticMN}, 
this definition is independent of the choice of the $\sigma^{(j)}$.
If $\nu /\tau$ is not $r$-decomposable, we set $\sgn_r(\nu/\tau) = 0$.

If $\nu / \tau$ is $r$-decomposable then it is possible to obtain
an abacus for $\nu$ by starting with an abacus for $\tau$
and making $n$ single-step downward bead moves. 
It follows that if $\nur$ is the $r$-quotient
of $\nu$ and
$\bigl( \tau(0), \ldots, \tau(r-1) \bigr)$ is the $r$-quotient of $\tau$
then $\nu(i) / \tau(i)$ is a skew partition for each $i$. We define
the \emph{$r$-quotient} of $\nu / \tau$, denoted $\nub/\taub$, to be the skew $r$-multipartition
$\bigl( \nu(0) / \tau(0), \ldots, \nu(r-1) / \tau(r-1) \bigr)$. 
%(Skew $r$-multipartitions
%were defined before Theorem~\ref{thm:main}.)
Conversely, the pair $(\nub/\taub, \tau)$ determines $\nu$.

%Conversely, given a skew $r$-multipartition $\nub /\taub$ as above,
%we define $\nub^\star_\tau$ to be $\nu$.

\begin{definition}\label{def:star}
Let $\tau$ be a partition with $r$-quotient $\bigl( \tau(0), \ldots, \tau(r-1) \bigr)$ 
and let $\nub / \taub$ be a skew $r$-multipartition of $n$. We define
$(\nub/\taub, \tau)^\star$ to be the unique partition $\nu$ such that $\nu/\tau$ is a
skew partition of $rn$ with $r$-quotient~$\nub/\taub$.
\end{definition}

Working with abaci with $6$ beads, we have
$\bigl( \bigl( (1), \varnothing, (2,1) / (1) \bigr), (3,2) \bigr)^\star = (6,5,2,1)$
as shown in Figure~1 above,
$\bigl( \bigl( (1), \varnothing, (2,1) / (1) \bigr), (3) \bigr)^\star = (6,2,2,2)$
and
$\bigl( \bigl( (1), (2), (1) / (1) \bigr),$ $(3,2) \bigr)^\star = (4,4,4,1,1)$.
Here we use the convention that a skew partition $\nu / \varnothing$ is written simply as $\nu$.

%As seen here, we omit the empty partition in $(1) / \varnothing$ when writing
%skew partitions.

%$\bigl( \bigl( (2) / (1), (1), (2,1) / (1) \bigr), (3,2,1) \bigr)^\star = (6,5,4,3)$,
%as shown in Figure~1 overleaf,
%$\bigl( \bigl( (2) / (1), (1), (2,1) / (1) \bigr), (4,4) \bigr)^\star = (7,7,2,2,2)$
%and
%$\bigl( \bigl( (1) / (1), (2), (2,1) / (1) \bigr), (3,2,1) \bigr)^\star = (6,6,3,3)$.

\subsubsection*{Ribbons}
Let $\rhonu / \sigma$ be a border strip in the partition $\rhonu$. If row $a$ is the least numbered
row of $\rhonu$ meeting $\rhonu / \sigma$ then we say that $\rhonu / \sigma$ has \emph{row number} $a$
and write $\R(\rhonu / \sigma) = a$.
Let $r \in \N$ and $q \in \N_0$. A skew partition $\rhonu / \tau$ of $rq$ is a \emph{horizontal $r$-ribbon strip} %,
%or \emph{$r$-ribbon} for short, 
if there exist partitions
\begin{equation}
\label{eq:strips} 
 \tau = \sigma^{(0)} \subset \sigma^{(1)} \subset \ldots \subset \sigma^{(q)} = \rhonu 
\end{equation}
such that $\sigma^{(j)}/\sigma^{(j-1)}$ is an $r$-border strip for each $j \in \{1,\ldots,q\}$ and
\begin{equation}
\label{eq:rownumbers}  
R(\sigma^{(1)} / \sigma^{(0)}) \ge \ldots \ge R(\sigma^{(q)} / \sigma^{(q-1)}). 
\end{equation}
For examples see Figure~1 above and Figure~3 in \S\ref{sec:example}.

\begin{figure}[t]
\begin{center}
\includegraphics{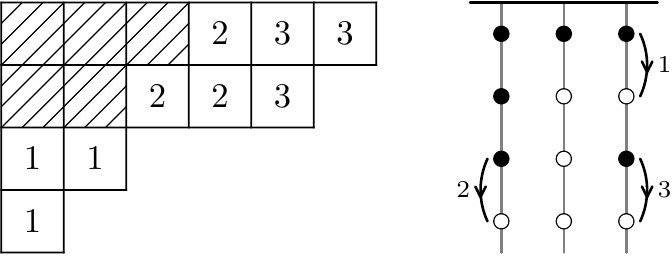}
\caption{\small The skew partition $(6,5,2,1) / (3,2)$ is a horizontal $3$-ribbon strip of size $9$, 
with $\sigma^{(1)} = (3,2,2,1)$ and $\sigma^{(2)} = (4,4,2,1)$. The
border strip $\sigma^{(i)}/\sigma^{(i-1)}$ is marked $i$; the row-numbers are~$3$,~$1$ and~$1$,
in increasing order of $i$.
The corresponding bead moves
on an abacus representing $(3,2)$ are  shown; note these satisfy the condition in Lemma~\ref{lemma:ribboneqv}(ii). The $3$-quotient of $(6,5,2,1) / (3,2)$ is $\bigl( (1), \varnothing, 
(2,1) / (1) \bigr)$, and so
$\bigl( \bigl( (1), \varnothing, (2,1) / (1) \bigr), (3,2) \bigr)^\star = (6,5,2,1)$.}
%\end{caption}
\end{center}
\end{figure}

The following lemma, which is used implicitly in \cite{DesarmenienLeclercThibon},
is needed in the proof of Theorem~\ref{thm:main}. 
%Informally,~(iii) % and should clarify
%this definition. In particular (ii) 
%says that the $\mu^{(i)}$ are uniquely determined
%by $\sigma / \mu$, and 
Informally, (iii) says that the border strips forming a horizontal $r$-ribbon strip are uniquely
determined by its shape. % the shape of the ribbon strip.
Note also that (iv) explains the sense in which horizontal $r$-ribbon strips are `horizontal'.

\begin{lemma}\label{lemma:ribboneqv}
Let $q \in \N_0$ and let $\rhonu / \tau$ be a skew partition of $rq$. The following are equivalent:
\begin{thmlist}
\item $\rhonu / \tau$ is a horizontal $r$-ribbon strip;
\item if $A$ is an abacus representing $\tau$ then,
for each $i \in \{0,1,\ldots,r-1\}$, there exists $c \in \N_0$ and unique positions 
$\beta_1, \ldots, \beta_{c}$ and 
$\gamma_1, \ldots, \gamma_{c}$  on runner~$i$ of $A$ with
\[ \beta_ 1 < \gamma_1 < \ldots < \beta_{c} < \gamma_{c} \]
such that moving the bead in position $\beta_j$ down to 
the space in position $\gamma_j$, for each~$j \in \{1,\ldots,c\}$ and $i \in \{0,1,\ldots,r-1\}$, gives
an abacus representing~$\rhonu$;
%\item if $B$ is an abacus representing $\rhonu$ then,
%for each $i \in \{0,1,\ldots,r-1\}$, there exist unique positions $\alpha_1, \ldots, \alpha_{c(i)}$ 
%and $\beta_1, \ldots, \beta_{c(i)}$ on runner $i$ with
%\[ \alpha_ 1 < \beta_1 < \ldots < \alpha_{c(i)} < \beta_{c(i)} \]
%such that moving the bead in position $\beta_j$ to the space in position $\alpha_j$, for each~$j$, gives
%an abacus representing $\tau$;
\item there exist unique partitions $\sigma^{(0)}, \ldots, \sigma^{(q)}$ satisfying~\eqref{eq:strips} and~\eqref{eq:rownumbers};
\item each skew partition $\rhonu(i)/\tau(i)$ in the $r$-quotient of $\rhonu/\tau$ 
has at most one box in each column of its Young diagram.
\end{thmlist}
\end{lemma}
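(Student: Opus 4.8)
The proof will establish the cycle of implications (ii) $\Rightarrow$ (iii) $\Rightarrow$ (i) $\Rightarrow$ (iv) $\Rightarrow$ (ii), since (i) and (iii) differ only in asserting uniqueness, and the abacus description (ii) is the natural bridge between the combinatorics of bead moves and the ordering condition~\eqref{eq:rownumbers}. The key translation tool, already recalled in the excerpt, is that adding a single $r$-border strip to a partition corresponds to a single downward bead move on one runner of an $r$-runner abacus, and that the row number $R(\sigma^{(j)}/\sigma^{(j-1)})$ is determined by which runner the move occurs on and how far down it goes; more precisely, for two beads moving down on distinct runners or at disjoint positions, the resulting border strips are comparable in row number in a way dictated entirely by the final bead positions.

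\textbf{Step 1: (ii) $\Rightarrow$ (iii).} Given the bead moves on each runner as in (ii), I would order all the moves (across all runners) by decreasing row number of the border strip each one produces; ties, which can only occur between moves on different runners landing in the same abacus row, are broken in any fixed way (say, by decreasing runner index), and one checks this gives a well-defined sequence of single downward moves, hence partitions $\sigma^{(0)} \subset \cdots \subset \sigma^{(q)}$ satisfying~\eqref{eq:strips}. The ordering was chosen precisely so that~\eqref{eq:rownumbers} holds. For uniqueness: any chain satisfying~\eqref{eq:strips} and~\eqref{eq:rownumbers} must consist of exactly the moves described in (ii) (the multiset of moves is determined since the abacus for $\rhonu$ is fixed), and~\eqref{eq:rownumbers} forces them into the sorted order, with the interleaving condition $\beta_1 < \gamma_1 < \cdots < \beta_c < \gamma_c$ on each runner being exactly what is needed for the within-runner moves to be legal and for no ambiguity to arise. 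This interleaving condition is also what must be verified to exist uniquely, and I expect this verification — showing the $\beta_j,\gamma_j$ on a given runner are forced — to be the most delicate bookkeeping in the argument.

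\textbf{Step 2: (iii) $\Rightarrow$ (i).} This is immediate: (iii) asserts existence (with uniqueness) of partitions satisfying~\eqref{eq:strips} and~\eqref{eq:rownumbers}, which is the definition of a horizontal $r$-ribbon strip.

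\textbf{Step 3: (i) $\Rightarrow$ (iv) and (iv) $\Rightarrow$ (ii).} For (i) $\Rightarrow$ (iv): translate a chain~\eqref{eq:strips}--\eqref{eq:rownumbers} to the abacus; each move on runner $i$ contributes one box to $\rhonu(i)/\tau(i)$, and the row-number ordering~\eqref{eq:rownumbers}, restricted to the submoves on runner $i$, forces these moves to occur in order of weakly decreasing landing row, so that consecutive boxes added to $\rhonu(i)/\tau(i)$ never lie in the same column — i.e.\ $\rhonu(i)/\tau(i)$ is a horizontal strip, which is exactly condition (iv). For (iv) $\Rightarrow$ (ii): if each $\rhonu(i)/\tau(i)$ is a horizontal strip, then on runner $i$ the bead moves carrying $\tau(i)$ to $\rhonu(i)$ are exactly a union of single downward moves with positions interlacing as $\beta_1 < \gamma_1 < \cdots < \beta_c < \gamma_c$ (this is the standard abacus description of when one partition's diagram differs from another by a horizontal strip, applied runner by runner), and uniqueness of the $\beta_j,\gamma_j$ is forced by the requirement that the $\beta$'s are the occupied-to-vacated positions and the $\gamma$'s the vacated-to-occupied ones, read in order. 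The cycle then closes. The informal remarks in the excerpt — that the strips are determined by the shape, and that `horizontal' is justified by (iv) — are exactly the content of Steps 1 and 3 respectively, so no separate argument for those is needed.
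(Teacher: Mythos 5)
Your overall architecture (a cycle of implications, with the uniqueness in (iii) carried by the step (ii) $\Rightarrow$ (iii)) is workable, but the engine that has to drive it is missing, and the principle you substitute for it is false. The paper's proof rests on one precise fact: a single-step downward move of the bead in position $\beta$ adds an $r$-border strip whose row number is one more than the number of beads occupying positions greater than $\beta+r$ \emph{at the time of the move}; consequently a sequence of single-step moves adds border strips with weakly decreasing row numbers, as required by~\eqref{eq:rownumbers}, if and only if the beads are moved in weakly increasing order of position. Your preamble instead asserts that the row number is ``determined by which runner the move occurs on and how far down it goes'' and is ``dictated entirely by the final bead positions''. That is not true: the row number depends on how many beads lie below the landing position when the move is made, and the same net moves performed in a different order produce strips with different row numbers (compare the two orders for two beads in adjacent positions on different runners). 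Because of this, Step 1's instruction to ``order all the moves by decreasing row number of the border strip each one produces'' is circular --- those row numbers are only defined once an order of moves has been fixed --- and the uniqueness verification you explicitly defer as ``the most delicate bookkeeping'' is exactly where the content of the lemma lies. With the position/row-number fact in hand, (i) $\Leftrightarrow$ (ii) and the uniqueness in (iii) follow almost immediately, which is how the paper argues.

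Two further gaps. First, the moves in (ii) relocate a bead from $\beta_j$ to $\gamma_j$, in general several steps down its runner; to build the chain~\eqref{eq:strips} these must be decomposed into single-step moves, each adding an $r$-border strip to a legal intermediate partition, and Step 1 treats them as if they were already single steps. Second, in Step 3 you cite ``the standard abacus description'' of a horizontal strip as mere interlacing of the vacated positions $\beta_j$ with the filled positions $\gamma_j$. Interlacing alone is not sufficient: on a single runner with beads in positions $1$ and $2$, relocating the bead at $1$ to the empty position $3$ has vacated set $\{1\}$ and filled set $\{3\}$, which trivially interlace, yet the quotient piece changes by $(2,2)/(1,1)$, a vertical domino, so (iv) fails. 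Condition (ii) must be read so that the moved bead passes only through empty positions (equivalently, no unmoved bead lies strictly between $\beta_j$ and $\gamma_j$), and both your (iv) $\Rightarrow$ (ii) and (ii) $\Rightarrow$ (iii) arguments need to engage with this point. So while (iii) $\Rightarrow$ (i) is indeed immediate, the remaining steps as written rest on an incorrect general principle and an imprecise citation, and the proposal does not yet constitute a proof.
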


\begin{proof}
Let $A$ be an abacus representing $\tau$. If $\beta$ is a position
in $A$ containing a bead then the row-number of the 
$r$-border strip corresponding to a single-step downward move of this bead
 is one more than the number of beads in the positions $\{ \beta+r+j : j \in \N \}$ of $A$.
Thus a sequence of single-step downward bead moves, moving beads in positions
$\beta_1, \ldots, \beta_{c}$ in that order, adds $r$-border strips in decreasing
order of their row number, as required by~\eqref{eq:rownumbers}, if and only if $\beta_1 \le \ldots \le \beta_c$.
It follows that (i) and (ii) are equivalent. It is easily seen that (ii) is equivalent to (iii) and~(iv).
\end{proof}

\subsubsection*{Ribbon tableaux}
Let $n \in \N_0$. Let $\rhonu/\tau$ be a skew partition of $rn$ and let~$\alpha$
be a composition of $n$ with exactly $\ell$ parts. 
An \emph{$r$-ribbon tableau} of \emph{shape} $\rhonu/\tau$ and \emph{weight} $\alpha$ is a sequence of partitions
\begin{equation} \tau = \rho^{(0)} \subset \rho^{(1)} \subset \ldots \subset \rho^{(\ell)} = \rhonu 
\label{eq:ribbons} \end{equation}
such that $\rho^{(j)}/\rho^{(j-1)}$ is a horizontal $r$-ribbon strip of size $r\alpha_j$ for each 
$j \in \{1,\ldots,\ell\}$.
We say that $\rho^{(j)}/\rho^{(j-1)}$ has \emph{label} $j$.
We denote the set of all  $r$-ribbon tableaux of shape $\rhonu/\tau$ and weight $\alpha$ by
$\rRT(\rhonu/\tau,\alpha)$. For an example see \S\ref{sec:example}~below.

%\subsubsection*{Multipartitions and multitableaux}

%
%\subsubsection*{Lattice words and tableaux}
%The \emph{content} of a word $w$ with entries in $\N$, denoted $\cont(w)$, is the composition $\alpha$
%such that, for each $k$, $\alpha_k$ is the number of positions of $w$ equal to $k$.
%For $\mu$ a partition, a word $w$ of length $\ell$
%with entries in $\N$ is a $\mu$-\emph{lattice word} if for every position %\emph{$\mu$-latticed} if for every position
%$i \in \{1,\ldots, \ell\}$ and all $k > 1$ we have
%\[ \bigl| \bigl\{ j : j \in \{i,\ldots, \ell\}, w_j = k-1 \bigr\} \bigr|  + \mu_{k-1}
%\ge \bigl| \bigl\{ j : j \in \{i,\ldots, \ell\}, w_j = k\bigr\}\bigr| + \mu_k.  \]
%(If $k-1$ or $k$ exceeds the number of parts of $\mu$ then take the corresponding part of $\mu$ to be zero.)
%If $t$ is a tableau then the \emph{word} of $t$ is obtained
%by reading the entries of $t$ from left to right in each row, starting in the
%greatest numbered row. We denote the word of $t$ by $\w(t)$ and define
%$\cont(t) = \cont(\w(t))$.

%\section{Combinatorial rules}

\subsection*{A plethystic Murnaghan--Nakayama rule}
In the second step of the proof of Theorem~\ref{thm:main} we need the following 
combinatorial rule.
Recall that~$h_\alpha$ denotes the complete symmetric function for the composition $\alpha$.

\begin{proposition}\label{prop:bs}
Let  $n \in \N_0$. If $\alpha$ is a composition of $n$ and~$\tau$ is a partition then
\[ s_\tau (h_\alpha \circ p_r) = \sum_{\nu} \bigl| \rRT(\nu/\tau,\alpha) \bigr| \sgn_r(\nu /\tau) s_\nu \]
%\sum_T \sgn_r(\nu / \tau) s_\nu
where the sum is over all partitions $\nu$ such that $\nu /\tau$ is a skew partition of~$rn$.
% $r$-ribbon tableaux $T$ of shape $\nu / \tau$ and weight $\alpha$.
\end{proposition}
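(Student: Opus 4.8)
The plan is to reduce Proposition~\ref{prop:bs} to the plethystic Murnaghan--Nakayama rule by expanding $h_\alpha \circ p_r$ as an iterated product of single power sums $p_r$ composed with complete symmetric functions, and then interpreting the resulting sum over sequences of border-strip additions as a count of $r$-ribbon tableaux. Recall that $p_r \circ h_m = h_m \circ p_r$ and that the Murnaghan--Nakayama rule for multiplication by $p_r$ (equivalently the plethystic version in \cite{WildonPlethysticMN}) gives $s_\sigma \cdot (h_m \circ p_r) = \sum_\rho \bigl| \{\text{hor. }r\text{-ribbon strips of size }rm\text{ and shape }\rho/\sigma\} \bigr| \sgn_r(\rho/\sigma) s_\rho$; the single-$m$ case is exactly the statement that $h_m \circ p_r$ acts on Schur functions by adding a horizontal $r$-ribbon strip with the appropriate sign. (If the reader prefers to cite Proposition~\ref{prop:bs} in the form due to \cite{DesarmenienLeclercThibon}, the single-part case $\alpha=(m)$ is immediate from the plethystic Murnaghan--Nakayama rule together with Lemma~\ref{lemma:ribboneqv}, which identifies the horizontal $r$-ribbon strips and shows the border strips in each are uniquely determined.)

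First I would record the single-part case: for $m \in \N_0$,
\[ s_\sigma (h_m \circ p_r) = \sum_\rho \bigl|\rRT(\rho/\sigma,(m))\bigr| \sgn_r(\rho/\sigma) s_\rho, \]
where the sum is over $\rho$ with $\rho/\sigma$ a skew partition of $rm$, since $\rRT(\rho/\sigma,(m))$ is either empty or a single horizontal $r$-ribbon strip. This follows from the plethystic Murnaghan--Nakayama rule: $h_m \circ p_r = p_r \circ h_m$, and $p_r \cdot$ applied to $s_\sigma$ adds $r$-border strips, while the Pieri-type restriction to $h_m$ forces the successive $r$-border strips (for the $rm$ boxes split into $m$ strips of size $r$) to form a horizontal strip in the sense of \eqref{eq:rownumbers}; the sign $\sgn_r(\rho/\sigma)$ is multiplicative over the border strips by the displayed definition of $r$-sign and its independence of the chosen chain.

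Next I would induct on $\ell$, the number of parts of $\alpha$. Write $\alpha = (\alpha_1,\ldots,\alpha_\ell)$ and set $\alpha' = (\alpha_1,\ldots,\alpha_{\ell-1})$, a composition of $n - \alpha_\ell$. Since $h_\alpha = h_{\alpha'} h_{\alpha_\ell}$ and plethysm with $p_r$ is a ring homomorphism, $h_\alpha \circ p_r = (h_{\alpha'}\circ p_r)(h_{\alpha_\ell}\circ p_r)$, so
\[ s_\tau(h_\alpha \circ p_r) = \bigl( s_\tau (h_{\alpha'}\circ p_r) \bigr)(h_{\alpha_\ell}\circ p_r). \]
Applying the inductive hypothesis to the inner factor expands it as $\sum_\rho |\rRT(\rho/\tau,\alpha')|\sgn_r(\rho/\tau) s_\rho$, and then applying the single-part case to each $s_\rho(h_{\alpha_\ell}\circ p_r)$ yields
\[ s_\tau(h_\alpha\circ p_r) = \sum_{\rho,\nu} |\rRT(\rho/\tau,\alpha')| \, |\rRT(\nu/\rho,(\alpha_\ell))| \, \sgn_r(\rho/\tau)\sgn_r(\nu/\rho) \, s_\nu. \]
For fixed $\nu$, a pair $(\rho, T', S)$ with $T' \in \rRT(\rho/\tau,\alpha')$ and $S \in \rRT(\nu/\rho,(\alpha_\ell))$ is exactly the data of an element of $\rRT(\nu/\tau,\alpha)$ by \eqref{eq:ribbons} (appending the horizontal $r$-ribbon strip $\nu/\rho$ with label $\ell$), so the inner double sum over $\rho$ counts $|\rRT(\nu/\tau,\alpha)|$; and $\sgn_r(\rho/\tau)\sgn_r(\nu/\rho) = \sgn_r(\nu/\tau)$ by multiplicativity of the $r$-sign along the chain $\tau \subset \rho \subset \nu$, which is part of the content of the definition of $\sgn_r$ and its independence of the chosen border-strip decomposition (\cite[2.7.26]{JK}, \cite[Proposition~3]{WildonPlethysticMN}). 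This gives the claimed formula.

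The main obstacle is the bookkeeping that makes the two-step expansion collapse cleanly onto a single $r$-ribbon tableau count: one must check that \emph{every} chain $\tau \subset \rho \subset \nu$ with the two pieces horizontal $r$-ribbon strips is counted (no over- or under-counting), which relies on Lemma~\ref{lemma:ribboneqv}(iii) — the border strips inside each horizontal $r$-ribbon strip are uniquely determined by its shape — so that the composite sequence of border strips is genuinely an element of $\rRT(\nu/\tau,\alpha)$ and conversely every such tableau splits uniquely at the $(\ell-1,\ell)$ boundary. A mild secondary point is the degenerate case $\alpha_j = 0$, where the corresponding $r$-ribbon strip is empty; this is harmless but should be noted so that compositions with zero parts are handled. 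Once these are in place the argument is a routine induction, and the sign multiplicativity is exactly the statement already quoted from \cite{JK} and \cite{WildonPlethysticMN}.
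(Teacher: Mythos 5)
Your proposal is correct and takes essentially the same route as the paper: the single-part case is exactly the plethystic Murnaghan--Nakayama rule of \cite{WildonPlethysticMN}, and the general case is obtained by induction on the number of parts using $h_{(\alpha_1,\ldots,\alpha_\ell)}\circ p_r=(h_{\alpha_1}\circ p_r)\cdots(h_{\alpha_\ell}\circ p_r)$, with the sign multiplicativity coming from the chain-independence of $\sgn_r$ as in \cite[2.7.26]{JK}. This is precisely the argument the paper sketches after the statement of Proposition~\ref{prop:bs}; your write-up just fills in the routine details of that induction.
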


This rule was first proved in
 \cite[page 29]{DesarmenienLeclercThibon}, using Muir's rule~\cite{Muir}. For an involutive proof of Muir's rule see \cite[Theorem~6.1]{LoehrAbacus}. 
The special case when $\tau = \varnothing$ and $\alpha$ has a single part is proved in \cite[I.8.7]{MacDonald}.
In this case  the result also follows from Chen's algorithm, as presented in \cite[page~130]{ChenGarsiaRemmel}.
The special case when~$\alpha$ has a single part was proved by the author
in~\cite{WildonPlethysticMN} using a sign-reversing involution. 
The general case then follows easily by induction, using that $h_{(\alpha_1,\ldots,\alpha_\ell)} 
\circ p_r = (h_{\alpha_1} \circ p_r)
\ldots (h_{\alpha_\ell} \circ p_r)$.

\subsection*{The Jacobi--Trudi formula}
\label{sec:rules}

Let $\ell \in \N$.
The symmetric group $\Sym_\ell$ acts on $\Z^\ell$ by place permutation.
Given $\alpha \in \Z^\ell$ and $g \in \Sym_\ell$, we define
$g \cdot \alpha = g (\alpha + \rho) - \rho$
where $\rho = (\ell-1,\ldots,1,0)$. For later use we note that
if $k \in \{1,\ldots,\ell-1\}$~then %if $g$ is the transposition $(i,i+1)$ then
\begin{equation}\label{eq:dotswap}
(k,k+1) \cdot \alpha = (\alpha_1, \ldots, \alpha_{k+1}-1,\alpha_k+1,\ldots, \alpha_\ell)
\end{equation}
where the entries in the middle are in positions $k$ and $k+1$.

The Jacobi--Trudi formula states that if $\lambda$ is a partition
with exactly $\ell$ parts and $\lambda / \mu$ is a skew partition then
\[ s_{\lambda/\mu} = \sum_{g \in \Sym_\ell} \sgn(g) h_{g \cdot \lambda - \mu}, \]
where if $\alpha$
has a strictly negative entry then we set $h_\alpha = 0$.
A proof of the formula is given in \cite[page 342]{StanleyII} by a beautiful involution
on certain tuples of  paths in $\Z^2$.

\section{A generalized Lascoux--Sch{\"u}tzenberger involution}
\label{sec:LS}

%We present a small generalization of the Lascoux--Sch{\"u}tzenberger involution used in \cite{RemmelShimozono}.
We begin by presenting the coplactic maps in \cite[\S 5.5]{Lothaire}. For further background
see \cite{LascouxSchutzenberger}.
Let $w$ be a word with entries in $\N$
and let $k \in \N$. Following the exposition in \cite{RemmelShimozono},
we replace each $k$ in $w$ with a right-parenthesis~`)' 
and each $k+1$ with a left-parenthesis~`('.
An entry $k$ or $k+1$ is $k$-\emph{paired} if its parenthesis has a pair,
according to the usual rules of bracketing, and otherwise $k$-\emph{unpaired}. 
Equivalently, reading $w$ from left to right, 
an entry~$k$ is $k$-unpaired if and only if it sets
a new record for the excess of $k$s over $(k+1)$s; dually, reading from right to left,
an entry $k+1$ is $k$-unpaired if and only if it sets a new record for the excess of $(k+1)$s over $k$s.
We may omit the `$k$-'
if it will be clear from the context.

\renewcommand{\underline}{\mathbf}

For example, if $w = 342\underline{22}4\underline{3}312\underline{3}11$ then the 
$2$-unpaired entries are shown in bold
and the corresponding
parenthesised word
is $(4)\mbf{))}4\mbf{(}(1)\mbf{(}11$ 

%Reading $w$ from left to right, his observation proves the following lemma.

\begin{lemma}\label{lemma:unpairedsubword}
Let $w$ be a word with entries in $\N$. Let $k \in \N$.
The subword of $w$ formed from its $k$-unpaired entries is
%\begin{equation}\label{eq:unpaired} 
$k^c (k+1)^d$ %\end{equation}
 for some $c$, $d \in \N_0$. Changing this subword to $k^{c'}\negvthinspace (k+1)^{d'}$, where $c'$, $d'\ \in \N_0$ 
 and $c'+d'=c+d$,
 while keeping all other positions the same, gives
a new word which has $k$-unpaired entries in exactly the same positions as~$w$.\hfill$\qed$
\end{lemma}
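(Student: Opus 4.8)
The plan is to route everything through the lattice path $p\mapsto f(p):=\#\{i\le p: w_i=k\}-\#\{i\le p: w_i=k+1\}$, with $f(0)=0$; letters of $w$ other than $k$ and $k+1$ leave $f$ unchanged and play no role. In these terms the pairing rules recalled above become: $w_p=k$ is $k$-unpaired exactly when $f(p)>f(p')$ for all $p'<p$ (so that $f(p)$ is a strict left-to-right record), and $w_q=k+1$ is $k$-unpaired exactly when $f(q-1)>f(s)$ for all $s\ge q$.

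For the first assertion, suppose some unpaired $k+1$ at position $q$ precedes some unpaired $k$ at position $p$. The first criterion at $p$, taken with $p'=q-1$, gives $f(p)>f(q-1)$, while the second criterion at $q$, taken with $s=p-1$ (legitimate since $p-1\ge q$), gives $f(q-1)>f(p-1)=f(p)-1$; no integer $f(q-1)$ satisfies both. Hence every unpaired $k$ precedes every unpaired $k+1$, so the unpaired subword is $k^c(k+1)^d$ --- which is of course the familiar fact that erasing matched parentheses leaves a run of closing brackets followed by a run of opening brackets.

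For the second assertion write $P$ for the set of positions of the unpaired entries of $w$. By the symmetry that reverses $w$ and swaps $k\leftrightarrow k+1$ (which preserves $k$-pairing and interchanges the two types of unpaired letter), together with induction, it suffices to treat one elementary modification: change to a $k$ the \emph{leftmost} unpaired $k+1$, say it sits at position $p_0\in P$. The new word $w'$ then differs from $w$ only at $p_0$, so $f_{w'}=f$ on $\{0,\dots,p_0-1\}$ and $f_{w'}=f+2$ on $\{p_0,\dots,|w|\}$. The crux is that, $p_0$ being the \emph{leftmost} unpaired $k+1$, the level $f(p_0-1)$ equals the global maximum $M$ of $f$: indeed the unpaired $(k+1)$'s of $w$, read left to right, sit immediately after the last visits of $f$ to the levels $M,M-1,\dots,f(|w|)+1$, and those last-visit positions increase as the level drops, so the leftmost unpaired $k+1$ follows the last visit to level $M$. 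Since also $f(s)<M$ for all $s\ge p_0$ (this is the $(k+1)$-criterion at $p_0$), we obtain $f_{w'}(p_0)=M+1$, and this value is the global maximum of $f_{w'}$ and is attained at no index below $p_0$.

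Granting this, one verifies the two criteria directly for $w'$: indices below $p_0$ are untouched, so their status is unchanged; the index $p_0$ now carries a $k$ reaching the fresh record $M+1$, so it is unpaired; an index above $p_0$ has $f_{w'}$ at it and everywhere to its right shifted from $f$ by the same constant $2$, so its paired/unpaired status is unchanged; and no index paired in $w$ becomes unpaired in $w'$ --- a couple of short cases according to the position of the index relative to $p_0$ and to which letter it carries, in each of which the relevant criterion for $w'$ is contradicted using the agreement/shift structure of $f_{w'}$ and the bound $\max f_{w'}=M+1=f_{w'}(p_0)$. Thus the unpaired entries of $w'$ occupy exactly $P$ and now read $k^{c+1}(k+1)^{d-1}$, and iterating (together with the symmetry) gives every admissible pair $(c',d')$. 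I expect this final step --- that shifting $f$ upward by $2$ along a suffix introduces no new unpaired entry --- to be the only real obstacle; the conceptual reason it must hold is that $w$ and $w'$, having the same subsequence of entries outside $P$, have the very same matched pairs, hence the same set of unpaired positions, and the lattice-path bookkeeping is merely a concrete way to see this.
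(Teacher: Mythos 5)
Your proof is correct and follows essentially the same route as the paper's: both reduce to the elementary move of turning the leftmost unpaired $k+1$ into a $k$ (with the dual move obtained by the reversal-and-swap symmetry) and then iterate, the paper arguing directly from the parenthesis pairing while you recast the same record criterion as a lattice-path argument with the key observation that the leftmost unpaired $k+1$ sits just after the last visit to the maximal level. Your write-up is in fact somewhat fuller than the paper's, since you also check (via the prefix-agreement/suffix-shift structure of the path and the bound $\max f_{w'}=M+1=f_{w'}(p_0)$) that no other position changes its paired/unpaired status, a verification the paper leaves implicit.
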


\begin{proof}
It is clear that any $k$ to the right of the rightmost unpaired $k+1$ in $w$ is paired.
Dually,
any $k+1$ to the left of the leftmost unpaired $k$ in~$w$ is paired. Hence
the subword of $w$ formed from its unpaired entries has the claimed form. When $d \ge 1$, changing
the unpaired subword from $k^c (k+1)^d$ to $k^{c+1}(k+1)^{d-1}$ replaces the first unpaired $k+1$,
in position $i$ say, with a $k$; 
since every $k+1$ to the left of position $i$ is paired, the new $k$ is unpaired. The dual
result holds when $c \ge 1$; together these imply the lemma.
\end{proof}

\begin{definition}\label{def:wordmaps}
Let $w$ be a word with entries from $\N$.
Suppose that the $k$-unpaired subword of $w$ is $k^c(k+1)^d$.
If $d > 0$, let $E_k(w)$ be defined by changing
the subword to $k^{c+1}(k+1)^{d-1}$, and if $c > 0$, let
$F_k(w)$ be defined by changing the subword to $k^{c-1}(k+1)^{d+1}$.
Let $S_k(w)$ be defined by changing the subword to $k^d(k+1)^c$.
\end{definition}

We now extend these maps to tuples of skew tableaux.
Let $\cont(t)$ denote the content
of a skew tableau $t$, and let $\w(t)$ denote its word, obtained by reading the rows of $t$ from left to right,
starting at the highest numbered row.
Let $m \in \N$ and let $\sigmab/\taub = 
\bigl(\sigma(1)/\tau(1), \ldots, \sigma(m)/\tau(m)\bigr)$ be a skew $m$-multipartition of $n \in \N$.
Let $\ell \in \N$ and let $\alpha \in \Z^\ell$.
Let $\mSSYT(\sigmab/\taub,\alpha)$ denote the set of all $m$-tuples $\bigl( t(1),\ldots,t(m)\bigr)$
of semistandard skew tableaux such that~$t(i)$ has shape $\sigma(i)/\tau(i)$ for each 
$i \in \{1,\ldots,m\}$ and
\begin{equation}\label{eq:cont} \cont\bigl(t(1)\bigr) + \cdots + \cont\bigl(t(m)\bigr) = \alpha. \end{equation}
Thus if $\alpha$ fails to be a composition because it has a negative entry then
$\mSSYT(\sigmab/\taub,\alpha) = \varnothing$. We call the elements
of $\mSSYT(\sigmab/\taub, \alpha)$
\emph{semistandard skew
$m$-multitableaux}
of \emph{shape} $\sigmab/\taub$, or \emph{$m$-multitableaux} for short.
The \emph{word} of an $m$-multitableau $\bigl(t(1),\ldots,t(m)\bigr)$ $\in \mSSYT(\sigmab/\taub,\alpha)$
is the concatenation
$\w\bigl(t(1)\bigr) \ldots \w\bigl(t(m)\bigr)$. For $k \in \N$  
we say that an entry of an $m$-multitableau~$\mathbf{t}$ is $k$-\emph{paired} if the corresponding
entry of $\w(\mathbf{t})$ is $k$-paired.
Note that, for fixed $\sigmab/\taub$, a word $w$ of length $n$ and content~$\alpha$ uniquely
determines an $m$-multitableau of shape $\sigmab/\taub$ satisfying~\eqref{eq:cont};
we denote this multitableau by $\T(w)$. (The skew $m$-multipartition $\sigmab/\taub$ will always be clear from the context.) 
Abusing notation slightly, we set
$E_k(\mathbf{t}) = \T\bigl( E_k(\w(\mathbf{t}))$, $F_k(\mathbf{t}) = \T\bigl( F_k(\w(\mathbf{t}))$  
(when either is defined) and 
$S_k(\mathbf{t}) = \T\bigl( S_k(\w(\mathbf{t}))$.

\begin{example}\label{ex:invol}
Consider the semistandard skew $3$-multitableau 
\[ \mathbf{t} = \left( \, \young(2\bz\bz,34)\,, \ \young(:2\bd,\bd3,4)\,, \ \young(11)\, \right).\]
The shape of $\mathbf{t}$ is $\bigl( (3,2), (3,2,1) /(1), (2) \bigr)$
and the $2$-unpaired entries are shown in bold. By Definition~\ref{def:wordmaps},~$E_2(\mathbf{t})$ is obtained from~$\mathbf{t}$
by changing the leftmost unpaired $3$ to a $2$, and $F_2(\mathbf{t})$ is obtained from $\mathbf{t}$
by changing the rightmost unpaired $2$ to a $3$. It follows that
\[ S_2E_2(\mathbf{t}) = 
\left( \, \young(2\bz\bd,34)\,, \ \young(:2\bd,\bd3,4)\,, \ \young(11)\, \right). 
\]
\end{example}

As mentioned in the introduction, one may identify a skew $m$-multitableau with
a single skew tableau of larger shape. For example, 
the semistandard skew $3$-multitableau $\mathbf{t}$ above corresponds to
\[ \scalebox{1}{$\young(::::::11,::::2\bd,:::\bd3,:::4,2\bz\bz,34)$\, .} \]
This identification may be used to reduce the next two results to 
Proposition~4 and the argument in \S 3 of \cite{RemmelShimozono}. We avoid it
in this paper, since it has an artificial flavour, and loses  combinatorial data: for instance,
the skew tableau above may also be identified with two different semistandard skew $2$-multitableaux.

\begin{lemma}\label{lemma:coplactic}
Let $m \in \N$,
let $\sigmab/\taub$ be a skew $m$-multipartition of $n \in \N_0$ 
and let $\alpha$ be a composition with exactly $\ell$ parts.
Fix $k \in \{1,\ldots,\ell-1\}$. Let $\mSSYT_k(\sigmab/\taub,\alpha)$ and 
$\mSSYT_{k+1}(\sigmab/\taub,\alpha)$ 
be the sets of $m$-multitableaux in $\mSSYT(\sigmab/\taub,\alpha)$
that have a $k$-unpaired~$k$ or a $k$-unpaired $k+1$, respectively.
%and let $\mSSYT_{k+1}(\sigmab/\taub,\alpha)$ be the set of all multitableaux having a $k$-unpaired~$(k+1)$
Let 
\[ \alphaepsilon(k) = (0,\ldots, 1, -1,\ldots,0) \in \Z^\ell,\] 
where the two non-zero entries are in positions $k$ and $k+1$.
The maps
\begin{align*}
%\text{\emph{(i)}}\ 
E_k &: \mSSYT_{k+1}(\sigmab/\taub,\alpha) \rightarrow \mSSYT_k\bigl( \sigmab/\taub,\alpha+\alphaepsilon(k) \bigr) \\
%\text{\emph{(ii)}}\ 
F_k &: \mSSYT_k(\sigmab/\taub,\alpha) \rightarrow \mSSYT_{k+1}\bigl( \sigmab/\taub,\alpha-\alphaepsilon(k) \bigr) \\
%\text{\emph{(iii)}}\ 
S_k &: \mSSYT_k(\sigmab/\taub,\alpha) \rightarrow \mSSYT_{k+1}\bigl( \sigmab/\taub,(k,k+1)\alpha \bigr)
\end{align*}
are bijections and $S_kE_k : \mSSYT_{k+1}(\sigmab/\taub,\alpha) \rightarrow
\mSSYT_{k+1}(\sigmab/\taub, (k,k+1) \cdot \alpha)$ is an involution.
\end{lemma}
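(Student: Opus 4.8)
The plan is to reduce the multitableau statement to the single‑word statement in Definition~\ref{def:wordmaps} and Lemma~\ref{lemma:unpairedsubword}, exploiting the fact that all three maps $E_k$, $F_k$, $S_k$ are defined on an $m$-multitableau $\mathbf{t}$ by acting on its word $\w(\mathbf{t})$ and then applying $\T$. The key observation is that $\T$ is, for fixed $\sigmab/\taub$, a bijection between words of a given length and content and $m$-multitableaux of shape $\sigmab/\taub$ with that content, whose inverse is $\w$. So everything comes down to checking (a) that the word‑level operations land in the image of $\T$, i.e.\ that they preserve semistandardness of each component, and (b) that Lemma~\ref{lemma:unpairedsubword} gives the required bijectivity and the involution property at the level of words.

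First I would record the content bookkeeping. If $\mathbf{t}$ has a $k$-unpaired $k+1$, then by Lemma~\ref{lemma:unpairedsubword} its $k$-unpaired subword is $k^c(k+1)^d$ with $d\ge 1$, and $E_k$ changes it to $k^{c+1}(k+1)^{d-1}$; this decreases the number of $(k+1)$s by one and increases the number of $k$s by one, so $\cont$ changes by $\alphaepsilon(k)=(0,\ldots,1,-1,\ldots,0)$, matching the codomain $\mSSYT_k(\sigmab/\taub,\alpha+\alphaepsilon(k))$. Dually for $F_k$ the content changes by $-\alphaepsilon(k)$. For $S_k$ the unpaired subword $k^c(k+1)^d$ becomes $k^d(k+1)^c$, which swaps the multiplicities of $k$ and $k+1$, i.e.\ replaces $\alpha$ by $(k,k+1)\alpha$ (the literal transposition of coordinates $k$ and $k+1$, not the dotted action). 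By the last clause of Lemma~\ref{lemma:unpairedsubword}, in every case the positions of the $k$-unpaired entries are unchanged, so after $E_k$ there is a $k$-unpaired $k$ (namely the one just created, or any pre‑existing one), placing the image in $\mSSYT_k$; similarly $F_k$ and $S_k$ land in $\mSSYT_{k+1}$ because a $k$-unpaired $k+1$ now exists. That $S_k=F_k^{d'}\!\circ\cdots$ or, more simply, that $S_k$ on words is the map $k^c(k+1)^d\mapsto k^d(k+1)^c$ makes it visibly an involution on words; composing with $\T$ and $\w$ transfers this to multitableaux, and likewise $E_k$ and $F_k$ are mutually inverse on words by Lemma~\ref{lemma:unpairedsubword} (applied with $(c',d')=(c+1,d-1)$ and back), hence bijections. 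The claim $S_kE_k$ is an involution then follows since on words $S_kE_k$ sends $k^c(k+1)^d$ to $k^{d-1}(k+1)^{c+1}$ and applying it again returns $k^c(k+1)^d$; one checks the intermediate object indeed has a $k$-unpaired $k+1$ so the second application is legal, which is exactly the statement that $S_kE_k$ maps $\mSSYT_{k+1}$ to $\mSSYT_{k+1}$, and it squares to the identity.

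The one genuine point requiring care — the main obstacle — is verifying that changing only the $k$-unpaired entries of $\w(\mathbf{t})$ produces a word that is still the word of a \emph{semistandard} multitableau, i.e.\ that $E_k(\mathbf{t})$, $F_k(\mathbf{t})$, $S_k(\mathbf{t})$ actually lie in $\mSSYT(\sigmab/\taub,-)$ rather than merely in the set of fillings with the right content. Since only entries equal to $k$ or $k+1$ are altered, and they are only toggled between these two adjacent values, the rows remain weakly increasing provided no row acquires a descent at a $k/(k{+}1)$ boundary, and the columns remain strictly increasing provided no column acquires a repeated $k$ or $k{+}1$ or an inversion there. The standard argument (as in \cite[\S3]{RemmelShimozono} and \cite[\S5.5]{Lothaire}) is that within any single row or column, reading it as a subword, the $k$-paired $k$s and $(k+1)$s are already correctly interleaved and are untouched, while the unpaired ones form a contiguous block $k\cdots k\,(k{+}1)\cdots(k{+}1)$ whose boundary can be slid without creating a violation; the subtlety is that one must verify the pairing computed from the \emph{global} word $\w(\mathbf{t})$ restricts correctly to rows and to columns, which is the content of the classical fact that coplactic moves preserve semistandardness. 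I would either cite this directly from the exposition in \cite[\S5.5]{Lothaire} applied to the single skew tableau obtained by the identification described after Example~\ref{ex:invol}, or spell out the short column/row check here; given the paper's stated aim of self‑containedness I would include the latter as a half‑page verification, after which the rest of the lemma is the purely formal word‑level bookkeeping above.
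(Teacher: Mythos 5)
Your overall strategy is the same as the paper's: work at the level of the word $\w(\mathbf{t})$, use Lemma~\ref{lemma:unpairedsubword} for the bijectivity and the positions of unpaired entries, and do the content bookkeeping to identify the codomains (your computation showing that the image content of $S_kE_k$ is $(k,k+1)\cdot\alpha$ rather than $(k,k+1)\alpha$ is exactly the point the paper settles via~\eqref{eq:dotswap}). That part of your write-up is correct. The problem is that you have deferred, rather than proved, the one step you yourself call the main obstacle: that $E_k(\mathbf{t})$ and $F_k(\mathbf{t})$ are again \emph{semistandard} multitableaux. Saying you ``would either cite this directly from \cite[\S 5.5]{Lothaire} \ldots or spell out the short column/row check here'' is not a proof of it, and the sketch you offer --- that the global $k$-pairing ``restricts correctly to rows and to columns'' and that the unpaired entries in a row or column form a block whose boundary ``can be slid'' --- is precisely the assertion that needs justification, not an argument for it; the pairing is computed from the whole concatenated word, and nothing you wrote rules out that toggling one entry creates a column violation. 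This verification is essentially the entire content of the paper's proof of the lemma.

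Concretely, the paper argues as follows for $E_k$: if changing the leftmost $k$-unpaired $k+1$, sitting in box $(a,b)$ of $t(j)$, into a $k$ breaks semistandardness, the only possible failure is $t_{(a-1,b)}=k$ (the row conditions and the column condition below are automatic). That $k$ occurs later in the reading word than the changed $k+1$, hence by Lemma~\ref{lemma:unpairedsubword} it is paired, necessarily with a $k+1$ in row $a$, column $b'>b$; semistandardness then forces $t_{(a,e)}=k+1$ and $t_{(a-1,e)}=k$ for all $b\le e\le b'$, and the nested structure of the pairing shows the $k+1$ at $(a,b)$ was paired after all, a contradiction. Some such argument (or an honest appeal to the literature) must appear in your proof; as written there is a genuine gap. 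Note also that the citation route via the identification of a multitableau with a single skew tableau, while mathematically legitimate, is exactly what the paper deliberately avoids in order to keep the lemma self-contained, so if you take it you should at least make the identification and the transfer of the pairing explicit rather than leaving both options open.
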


\begin{proof}
Let $\mathbf{t} = \bigl(t(1),\ldots,t(m)\bigr) \in \mSSYT(\sigmab/\taub,\alpha)$. 
The main  work comes in showing that
$E_k(\mathbf{t})$, $F_k(\mathbf{t})$  are semistandard (when defined).
Suppose that $E_k(\mathbf{t}) = \bigl(t'(1),\ldots,t'(m)\bigr)$ and that
the first %$k$-
unpaired $k+1$ in $\w(\mathbf{t})$
corresponds to the entry in row $a$ and column~$b$ of tableau $t(j)$. 
Thus $t'(j)$
is obtained from $t(j)$ by changing this entry to an unpaired~$k$
and $t'(i) = t(i)$ if $i\not=j$.

Let $t = t(i)$, let $t' = t'(i)$ and
write $u_{(a,b)}$ for the entry of a tableau~$u$ in row $a$ and column $b$.
If $t'$ fails to be semistandard then $a > 1$, $(a-1,b)$ is a box in $t$, and 
$t'_{(a-1,b)} = k$. Hence $t_{(a-1,b)} = k$. This~$k$ is
to the right of the unpaired $k+1$ in $\w(t)$, so 
by~Lemma~\ref{lemma:unpairedsubword} it is paired, necessarily
with a $k+1$ in row $a$ and some column $b' > b$ of $t$.
Since 
\[ k = t_{(a-1,b)} \le t_{(a-1,b')} < t_{(a,b')} = k+1 \] 
we have $t_{(a-1,b')} = k$.
Thus $t_{(a,e)} = k+1$ and $t_{(a-1,e)} = k$ for every $e \in \{b,\ldots, b'\}$.
Since $t_{(a-1,b)}$ is paired with $t_{(a,b')}$ under the $k$-pairing, we see that $t_{(a-1,b+j)}$ is paired with
 $t_{(a,b'-j)}$
for each $j \in \{0,\ldots, b'-b\}$. %\le b'-b$. 
In particular, the $k+1$ in position $(a,b)$ of $t$ is paired,
a contradiction. Hence $E_k( \mathbf{t})$ is semistandard. %t(1),\ldots,t(m)\bigr)$ is semistandard. 
The proof is similar 
for $F_k$ in the case when $\mathbf{t}$ has an unpaired~$k$.

It is now routine to check that $E_kF_k$ and $F_kE_k$ are the identity maps on their respective domains, so $E_k$ and $F_k$
are bijective. If the unpaired subword of $\w(\mathbf{t})$ is 
$k^c (k+1)^d$ then $S_k(\mathbf{t}) = E_k^{d-c}(\mathbf{t})$ 
if $d \ge c$ and $S_k(\mathbf{t}) = F_k^{c-d}$ if $c \ge d$. Hence $S_k$ is an involution.
A similar argument shows that $S_k E_k$
is  an involution. By~\eqref{eq:dotswap} at the end of \S\ref{sec:background},
the image of $S_kE_k$ is as claimed.
\end{proof}

We are  ready to define our key involution. Say that a semistandard skew multitableau $\mathbf{t}$
is \emph{latticed} if $\w(\mathbf{t})$ has no $k$-unpaired $(k+1)$s, for any $k$.
Let $\lambda$ be a partition of $n \in \N_0$ with exactly~$\ell$ parts, let~$\sigmab/\taub$
be a skew $m$-multipartition of $n$ and let 
\begin{equation}\label{eq:T} 
\mathcal{T} = \bigcup_{g \in \Sym_\ell} \mSSYT(\sigmab/\taub, g \cdot \lambda). \end{equation}
Observe that if $g\not= \id_{\Sym_\ell}$ then $g \cdot \lambda$ is not a partition, and so
no element of $\mSSYT(\sigmab/\taub, g \cdot \lambda)$ is latticed. Therefore the set
\[ \mSSYTL(\sigmab/\taub, \lambda) = \bigl\{ \mathbf{t} \in \mSSYT(\sigmab/\taub, \lambda) : 
\text{$\mathbf{t}$ is latticed} \bigr\} \]
is precisely the latticed elements of $\mathcal{T}$.
Let $\mathbf{t} \in \mathcal{T}$. If $\mathbf{t}$ is latticed then define $G(\mathbf{t}) = \mathbf{t}$.
Otherwise consider the $k$-unpaired entries in $\w(\mathbf{t})$ for each $k \in \N$. If
the rightmost $k$-unpaired entry (for some $k$) is $k+1$ then 
define $G(\mathbf{t}) = S_kE_k(\mathbf{t})$. 

For instance, in Example~\ref{ex:invol} we have $k=2$
and $G(\mathbf{t}) = S_2E_2(\mathbf{t})$.

\begin{proposition}\label{prop:LS}
Let $m \in \N$,
let $\sigmab/\taub$ be a skew $m$-multipartition of $n \in \N_0$, and let $\lambda$ be a partition
of $n$. Let $\mathcal{T}$ be as defined in~\eqref{eq:T}.
The map $G : \mathcal{T} \rightarrow \mathcal{T}$ is an involution fixing precisely the skew
$m$-multitableaux
in $\mSSYTL(\sigmab/\taub,\lambda)$. If $\mathbf{t} \in \mSSYT(\sigmab/\taub, g\cdot \lambda)$
and $G(\mathbf{t}) \not= \mathbf{t}$ then $G(\mathbf{t}) \in \mSSYT(\sigmab/\taub, (k,k+1)g \cdot \lambda)$
for some $k \in \{1,\ldots, \ell-1\}$.
\end{proposition}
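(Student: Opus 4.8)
The plan is to verify the three asserted properties of $G$ in turn: that $G$ is well defined, that $G$ is an involution, and that its fixed points are exactly the latticed multitableaux, together with the claim about how $G$ shifts the content. The key point throughout is that $G$ is built out of the maps $S_k E_k$ furnished by Lemma~\ref{lemma:coplactic}, applied at a canonically chosen value of $k$, so most of the work is in checking that this canonical choice of $k$ is compatible with iteration.

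First I would check that $G$ is well defined. If $\mathbf{t}$ is not latticed, then by definition of ``latticed'' there is some $k$ for which $\w(\mathbf{t})$ has a $k$-unpaired $(k+1)$. Among all such $k$ I must argue there is a \emph{unique} $k$ for which the \emph{rightmost} $k$-unpaired entry is a $k+1$: by Lemma~\ref{lemma:unpairedsubword} the $k$-unpaired subword has the form $k^c(k+1)^d$, so ``there is a $k$-unpaired $(k+1)$'' means $d>0$, i.e.\ the rightmost $k$-unpaired entry is automatically a $k+1$; hence the condition selects every $k$ with $d>0$, and I should instead take the \emph{largest} such $k$ (this is the standard convention in \cite{RemmelShimozono}; I would state it explicitly), which is unique. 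Then $S_kE_k(\mathbf{t})$ is defined since $d>0$ means $E_k$ is defined, and lands in $\mathcal{T}$: if $\mathbf{t}\in\mSSYT(\sigmab/\taub,g\cdot\lambda)$, then by Lemma~\ref{lemma:coplactic} together with~\eqref{eq:dotswap}, $S_kE_k(\mathbf{t})\in\mSSYT(\sigmab/\taub,(k,k+1)g\cdot\lambda)$. This simultaneously establishes the last sentence of the proposition.

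Next, that $G$ fixes precisely $\mSSYTL(\sigmab/\taub,\lambda)$: latticed multitableaux are fixed by definition of $G$; conversely, as observed in the paragraph preceding the proposition, any latticed element of $\mathcal{T}$ has content a genuine partition, hence lies in $\mSSYT(\sigmab/\taub,\lambda)$, so the latticed elements of $\mathcal{T}$ are exactly $\mSSYTL(\sigmab/\taub,\lambda)$; and a non-latticed $\mathbf{t}$ is moved by $G$ because $S_kE_k$ changes the content (it multiplies $g\cdot\lambda$ by the transposition $(k,k+1)$ in the $\cdot$-action, which is not the identity on the content vector unless $\lambda_k+1=\lambda_{k+1}$ adjusted by $\rho$, but in any case $S_kE_k$ strictly changes the unpaired subword $k^c(k+1)^d\mapsto k^d(k+1)^{d-c+\cdots}$ — more simply, $E_k$ already changes the content).

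The main obstacle is showing $G$ is an involution, i.e.\ that applying $G$ to $G(\mathbf{t})$ returns $\mathbf{t}$. The strategy: let $\mathbf{t}$ be non-latticed with selected index $k$, and set $\mathbf{t}' = G(\mathbf{t}) = S_kE_k(\mathbf{t})$. By Lemma~\ref{lemma:unpairedsubword}, the positions of the $j$-unpaired entries of $\w(\mathbf{t})$, for every $j$, depend only on $\w(\mathbf{t})$ outside the $k$-unpaired subword when $j\ne k,k\pm 1$, and one checks that $E_k$ and $S_k$ change only entries within the $k$-unpaired subword; so for $j$ not adjacent to $k$ the $j$-unpaired structure is untouched. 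The crux is to verify that $k$ is again the largest index witnessing non-latticedness for $\mathbf{t}'$, and that $S_kE_k$ is its own inverse on this set — the latter is exactly the assertion in Lemma~\ref{lemma:coplactic} that $S_kE_k$ is an involution $\mSSYT_{k+1}(\sigmab/\taub,\alpha)\to\mSSYT_{k+1}(\sigmab/\taub,(k,k+1)\cdot\alpha)$, so the real content is that the index selection is stable. I would argue: for $j>k$, $\mathbf{t}$ has no $j$-unpaired $(j+1)$, and since $S_kE_k$ alters only $k$'s and $(k+1)$'s it cannot create a $j$-unpaired $(j+1)$ for $j>k+1$, while for $j=k+1$ a short argument using Lemma~\ref{lemma:unpairedsubword} (the $(k+1)$-unpaired subword only sees the $k+1$'s and $k+2$'s, and the total count of $(k+1)$'s plus the pairing pattern with $k+2$'s is preserved in the relevant sense) shows no $(k+1)$-unpaired $(k+2)$ is created; and for $j=k$, $S_kE_k$ sends $k^c(k+1)^d$ with $d>0$ to a subword again containing an unpaired $k+1$. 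Hence $k$ is selected for $\mathbf{t}'$ as well, and $G(\mathbf{t}')=S_kE_k(\mathbf{t}')=S_kE_k(S_kE_k(\mathbf{t}))=\mathbf{t}$. This is the step needing care, and it is essentially the argument of \cite[\S 3]{RemmelShimozono} transported to multitableaux via the word $\w(\mathbf{t})$; I would present it at that level of detail.
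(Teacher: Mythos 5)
The gap is at exactly the step you flag as needing care: the stability of the chosen index $k$ under $S_kE_k$. Your resolution of the ambiguity in the definition of $G$ --- apply $S_kE_k$ for the \emph{largest} $k$ such that $\w(\mathbf{t})$ has a $k$-unpaired $k+1$ --- does not work, and the ``short argument using Lemma~\ref{lemma:unpairedsubword}'' you appeal to (that no $(k+1)$-unpaired $(k+2)$ can be created) is false: $S_kE_k$ changes the $k$-unpaired subword $k^c(k+1)^d$ to $k^{d-1}(k+1)^{c+1}$, so when $d\ge c+2$ it deletes some $(k+1)$s, and deleting $(k+1)$s can unpair a $k+2$. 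Concretely, take $m=2$, $\sigmab/\taub=\bigl((3),(3)\bigr)$, $\lambda=(2,2,2)$, and let $\mathbf{t}$ be the pair of one-row tableaux with entries $3,3,3$ and $2,2,2$, so $\w(\mathbf{t})=333222$ and $\cont(\mathbf{t})=(0,3,3)=g\cdot\lambda$ for a suitable $g$. The only index with a $k$-unpaired $k+1$ is $k=1$, so your $G$ sends $\mathbf{t}$ to the multitableau with word $333112$; this word has $2$-unpaired $3$s in its first two positions, so the largest violated index is now $2$, and your $G$ applied again uses $S_2E_2$ and produces the word $233112\neq 333222$. Thus with your convention $G$ is not an involution. (Your argument that non-latticed elements are moved is also shaky: ``$E_k$ already changes the content'' is not enough, since $S_k$ can change it back --- indeed $S_kE_k$ fixes the word whenever $\alpha_{k+1}=\alpha_k+1$; the correct reason is that $\alpha=g\cdot\lambda$ with $\lambda+\rho$ having distinct entries, so $(k,k+1)\cdot\alpha\neq\alpha$.)

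The reading of the definition that works, and the one consistent with Example~\ref{ex:invol} (where $k=2$ is selected), is positional: among all entries of $\w(\mathbf{t})$ that are $k$-unpaired $(k+1)$s for some $k$, take the \emph{rightmost}; its value determines $k$ uniquely. With this choice the proposition really does follow at once from Lemma~\ref{lemma:coplactic}: by Lemma~\ref{lemma:unpairedsubword} the set of $k$-unpaired positions is unchanged and the selected position is the rightmost of them, so every entry altered by $S_kE_k$ lies weakly to its left; whether an entry in a strictly later position is a $j$-unpaired $j+1$ depends only on the entries from that position onwards, which are untouched, so no unpaired $j+1$ appears to the right of the selected position for any $j$; and the selected position again holds a $k$-unpaired $k+1$ afterwards, since the new unpaired subword $k^{d-1}(k+1)^{c+1}$ has $c+1\ge 1$. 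Hence $G(\mathbf{t})$ selects the same position and the same $k$, and $G^2=\id$ follows because $S_kE_k$ is an involution; the content statement is~\eqref{eq:dotswap}, as you say. In the example above this choice selects $k=1$ again for $333112$ (the rightmost $1$-unpaired $2$, in position $6$, lies to the right of the $2$-unpaired $3$s), and $S_1E_1$ indeed returns $333222$.
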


\begin{proof}
This follows immediately from Lemma~\ref{lemma:coplactic}.
\end{proof}

This is a convenient place to define our generalized Littlewood--Richardson coefficients. 
In \S\ref{subsec:nonPlethystic} we show these specialize to the original definition.

\begin{definition}
The \emph{Littlewood--Richardson coefficient} corresponding to a partition $\lambda$ of $n$ 
and a skew $m$-multipartition $\sigmab/\taub$ of $n$ is
\[ c^{\lambda}_{\sigmab/\taub} = \SSYTL(\sigmab/\taub,\lambda). \]
%$\bigl(\sigma(1)/\tau(1),\ldots,\sigma(m)/\tau(m)\bigr)$ of $n$ is
%%\[ %c^\lambda_\sigmab = 
%$c^{\lambda}_{(\sigma(1)/\tau(1),\ldots,\sigma(m)/\tau(m))} = 
%%\bigl| \mSSYTL( \sigmab, \lambda) \bigr|. \]
%\bigl|
%\mSSYTL\bigl( \bigl(\sigma(1)/\tau(1),\ldots,\sigma(m)/\tau(m)\bigr), \lambda \bigr)
%\bigr|$. %\]
\end{definition}

\vspace*{-3pt}

\enlargethispage{3pt}

\section{Proof of Theorem~\ref{thm:main}}
\label{sec:proof}

Suppose that $\lambda$ has exactly $\ell$ parts.
The outline of the proof is as follows:
\begin{align}
s&_\tau(s_{\lambda / \mu} \circ p_r) \\
	&= \sum_{g \in \Sym_\ell} \!\sgn(g) s_\tau (h_{g \cdot \lambda - \mu} \circ p_r) \label{eq:JacobiTrudi} \\
	&= \sum_{g \in \Sym_\ell } \!\sgn(g) \sum_{\nu} \bigl|  \rRT(\nu/\tau, g \cdot \lambda - \mu) 
	\hskip-0.5pt
	\bigr| \sgn_r(\nu/\tau) s_\nu \label{eq:RT} \\
	&= \sum_{g \in \Sym_\ell} \!\sgn(g) \sum_{\nub} \bigl|  \mSSYT(\nub/\taub, g \cdot \lambda - \mu) 
	\hskip-0.5pt
	\bigr| \sgn_r\bigl( (\nub/\taub, \tau)^\star \bigr) s_{(\nub/\taub, \tau)^\star} \label{eq:mSSYT} \\
 	&= \sum_{\nub} \bigl|  \mSSYTL \bigl( \nub /\taub : \mu,  \lambda \bigr) \bigr|
	  \sgn_r\bigl( (\nub/\taub, \tau)^\star \bigr) s_{(\nub/\taub, \tau)^\star} \label{eq:afterInvol} \\
	&= \sum_{\nub} c^\lambda_{\nub/\taub \vthinspace : \vthinspace \mu}
	%c^\lambda_{(\nu(0)/\tau(0), \ldots, \nu(r-1)/\tau(r-1), \mu) } 
	\sgn_r\bigl( (\nub/\taub, \tau)^\star \bigr) s_{(\nub/\taub, \tau)^\star}, \label{eq:ident}
\end{align}
where the sum in~\eqref{eq:RT} is over all partitions $\nu$ such that $\nu/\tau$ is a skew partition
of $rn$, the sums in~\eqref{eq:mSSYT} and~\eqref{eq:afterInvol} are over all $r$-multipartitions
$\nub$ such that $\nub / \taub$ is a skew $r$-multipartition of $n$, and in~\eqref{eq:afterInvol}
and~\eqref{eq:ident},
$\nub/\taub : \mu$ is the skew $(r+1)$-multipartition $\bigl(\nu(0)/\tau(0),\ldots,\nu(r-1)/\tau(r-1),
\mu\bigr)$
obtained from~$\nub/\taub$ by appending~$\mu$.

%In~\eqref{eq:afterInvol}, $\mSSYTL_\mu(\nu,\lambda)$ is, by definition, the 
%set of all $\bigl( T(0), \ldots, T(r-1)\bigr) \in \mSSYT(\nub, \lambda)$ such that
%$\bigl( T(0), \ldots, T(r-1), S\bigr)$ is latticed, where $S$ is the semistandard $\mu$-tableau
%having all its entries in its $j$-th row equal to $j$.

We now give an explicit bijection or involution establishing each step. For an illustrative
example see \S\ref{sec:example} below.

\begin{proof}[Proof of \eqref{eq:JacobiTrudi}] Apply the Jacobi--Trudi formula for skew Schur functions,
as stated in \S\ref{sec:rules}.
%to $s_{\lambda / \mu}$. 
\end{proof}

\begin{proof}[Proof of \eqref{eq:RT}] Apply Proposition~\ref{prop:bs} to each
$s_\tau(h_{g \cdot \lambda - \mu} \circ p_r)$.
\end{proof}

\begin{proof}[Proof of \eqref{eq:mSSYT}] Let $T$ be a 
$r$-ribbon tableau
of shape~$\nu/\tau$ and weight $\alpha$ as in~\eqref{eq:ribbons},
so $T$ corresponds to the sequence of partitions
\[ \tau = \rho^{(0)} \subset \rho^{(1)} \subset \ldots \subset \rho^{(\ell)} = \nu 
%\label{eq:ribbons}
\]
where $\rho^{(j)}/\rho^{(j-1)}$ is a horizontal $r$-ribbon strip of size $r\alpha_j$ for each $j
\in \{1,\ldots,\ell\}$.
Let $\nu/\tau$ have $r$-quotient $\nub/\taub = 
\bigl(\nu(0)/\tau(0), \ldots, \nu(r-1)/\tau(r-1)\bigr)$, so $(\nub/\taub, \tau)^\star = \nu$.
Take an abacus $A$ representing $\tau$ with a multiple of $r$ beads.
The sequence above
defines a sequence of single-step downward bead moves leading from $A$ to an abacus
$B$ representing~$\nu$. For each bead moved on runner $i$ put the label of the corresponding
horizontal $r$-ribbon strip in the
corresponding box of the Young diagram of $\nu(i)/\tau(i)$.
By Lemma~\ref{lemma:ribboneqv}(iv), this defines a semistandard skew tableau $t(i)$ of shape $\nu(i)/\tau(i)$ 
for each~$i \in \{0,\ldots,r-1\}$.
Conversely, given $\bigl( t(0), \ldots, t(r-1)\bigr) \in \mSSYT(\nub/\taub, \alpha)$, 
one obtains a sequence
of single-step downward bead moves satisfying the condition in Lemma~\ref{lemma:ribboneqv}(ii), 
and hence an $r$-ribbon tableau of shape $\nu/\tau$ and content $\alpha$.
Thus the map sending $T$ to $\bigl( t(0), \ldots, t(r-1)\bigr)$ is a bijection
from $\rRT(\nu/\tau, g \cdot \lambda - \mu)$
to $\mSSYT(\nub/\taub, g \cdot \lambda - \mu)$, as required. 
\end{proof}

\begin{proof}[Proof of \eqref{eq:afterInvol}]
Fix a skew $r$-multipartition $\nub/\taub$ of $n$. Let
\[ \mathcal{T} = \bigcup_{g \in \Sym_\ell} \mSSYT(\nub/\taub : \mu, g \cdot \lambda). \]
%where $\nub : \mu$ denotes the multipartition $\bigl( \nu(0), \ldots, \nu(r-1), \mu \bigr)$.
Let $G$ be the involution on $\mathcal{T}$ defined in \S\ref{sec:LS}.
Let $u(\mu)$ be the semistandard $\mu$-tableau having all its entries in its $j$-th row equal to $j$
for each relevant $j$.
Note that $u(\mu)$ is the unique  latticed semistandard  $\mu$-tableau. Thus if
\begin{equation}
\label{eq:finalFixed} \mathcal{T}_\mu = \bigl\{ \bigl(t(0), \ldots, t(r-1), v \bigr) \in \mathcal{T} :  
v = u(\mu)\bigr\} \end{equation}
then $\mSSYTL(\nub/\taub : \mu, \lambda) \subseteq \mathcal{T}_\mu$.
Let $\mathbf{t} \in \mathcal{T}_\mu$. %If $\mu$ is a partition of $s$ then the
The final $|\mu|$ positions of $\w(\mathbf{t})$
correspond to the entries of $u(\mu)$. 
%final $|\mu|$ positions
%of $\w(\mathbf{t})$.
Every entry $k + 1$ in these positions is $k$-paired. If an entry $k$ in one of these
positions is $k$-unpaired then there is no $k$-unpaired $k+1$ to its left, so 
every $k+1$ in $\w(\mathbf{t})$ is $k$-paired. 
It follows that the final semistandard tableau in~$G(\mathbf{t})$ is $u(\mu)$
and so~$G$ restricts to an involution on~$\mathcal{T}_\mu$. 
By Proposition~\ref{prop:LS}, the fixed-point set of~$G$, acting on either
$\mathcal{T}$ or~$\mathcal{T}_\mu$,
is $\mSSYTL(\nub/\taub : \mu, \lambda)$.

%
%Moreover, if $(T(0), \ldots, T(r-1), U) \in \mSSYTL(\nub : \mu, \lambda)$
%then, since every entry $k > 1$ of $\w(U)$ is paired, $U = S$. 
%Therefore, by Proposition~\ref{prop:LS}, the fixed point set of $G$ on either $\mathcal{T}$ or $\mathcal{T}'$
%is $\mSSYTL(\nub : \mu, \lambda)$.

%There is an obvious bijection
%\[ \bigcup_{g \in \Sym_\ell} \mSSYT(\nub, g \cdot \lambda - \mu) \rightarrow \mathcal{T}'. \]
%%\xrightarrow{\text{append $S$}}\mathcal{T}' \]
%given by appending $S$ to each $r$-multitableau. 

The part of the sum in~\eqref{eq:mSSYT} corresponding to the skew $r$-multipartition $\nub/\taub$ is
\[ \sum_{g \in \Sym_\ell}\, \sum_{\mathbf{t}\in \SSYT(\nub/\taub, g \cdot \lambda - \mu)} \sgn(g)\sgn_r\bigl( (\nub/\taub, \tau)^\star \bigr)  s_{ (\nub/\taub, \tau)^\star}. \]
%where the third sum is over all $\mathbf{t} \in \SSYT(\nub, g \cdot \lambda - \mu)$.
The set of $r$-multitableaux $\mathbf{t}$ %of shape $\nub$ 
 in this sum is
$\mathcal{S} = \bigcup_{g \in \Sym_\ell} \mSSYT(\nub/\taub, g \cdot \lambda -\mu)$.
There is an obvious bijection $A : \mathcal{S} \rightarrow \mathcal{T}_\mu$
given by appending $u(\mu)$ to a skew $r$-multitableau in $\mathcal{S}$. 
By the remarks above,~$A^{-1} G A$ is an
involution on~$\mathcal{S}$.
Since  $\sgn(g) = - \sgn\bigl( (k,k+1)g \bigr)$, it follows from Proposition~\ref{prop:LS} that
the contributions to~\eqref{eq:mSSYT}
from $r$-multitableaux $\mathbf{t} \in \mathcal{S}$ such that $A(\mathbf{t}) \not\in 
\mSSYTL(\nub/\taub : \mu, \lambda)$
cancel in pairs, leaving exactly the  $r$-multitableaux $\mathbf{t}$ such 
that $A(\mathbf{t}) \in \mSSYTL(\nub/\taub : \mu, \lambda)$. This proves~\eqref{eq:afterInvol}.
\end{proof}

\begin{proof}[Proof of \eqref{eq:ident}]
This is true by our definition of the Littlewood--Richardson coefficient 
$c^\lambda_{\nub/\taub \vthinspace:\vthinspace \mu}$. %(\nu(0)/\tau(0),\ldots,\nu(r-1)/\tau(r-1)\mu)}$.
\end{proof}

\bigskip
\section{Example}
\label{sec:example}

We illustrate~\eqref{eq:mSSYT} and \eqref{eq:afterInvol} in the proof of Theorem~\ref{thm:main}.
Let $r=3$, let $\lambda = (3,3)$, $\mu = \varnothing$ and $\tau = (3,2)$.
Take $\nu = (6,5,5,5,2)$. From the abaci
shown in Figure~2 below,
we see that $\nub /\taub= \bigl( (1), (2), (2,2) / (1) \bigr)$.
\begin{figure}[b]
\begin{center}
\includegraphics{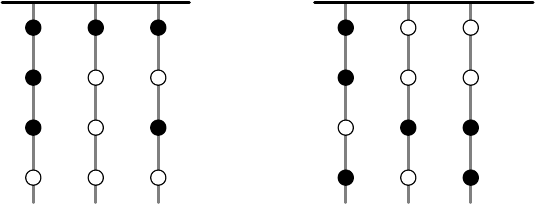}
\end{center}
\caption{Abaci for $(3,2)$ and $(6,5,5,5,2)$.}
\end{figure}
We have $(1,2)\cdot \lambda = (2,4)$. The horizontal $3$-ribbon
tableaux of shape $(6,5,5,5,2)$ and weights $(3,3)$ and $(2,4)$
are shown in Figure~3 above.
\begin{figure}[t]
\hspace*{-0.25in}\includegraphics{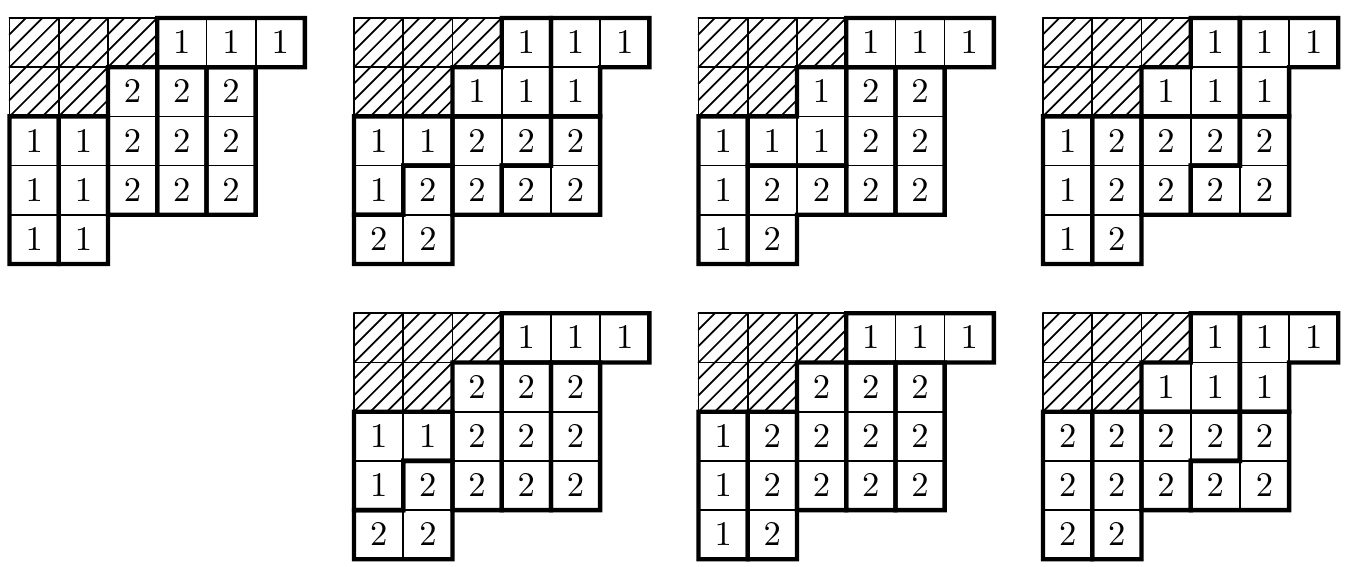}
%\hspace*{-0.75in}\includegraphics{Fig2RTs3351.pdf} 
\caption{\small The four $3$-ribbon tableau tableaux in $3$-$\RT\bigl( (6,5,5,5,2)/(3,2),(3,3) \bigr)$ 
and the corresponding $3$-ribbon tableaux in $3$-$\RT\bigl( (6,5,5,5,2)/(3,2), (2,4)\bigr)$ 
under the $G$ involution. The first tableaux in the top line is latticed, and so is fixed by~$G$.
In each case the ribbon with label $i$ is marked $i$, and its unique partition into $3$-border strips,
as in~\eqref{eq:strips},
is shown by heavy lines.}
\end{figure}
%We have $\nub = \bigl( (2), (1), (2,1) \bigr)$.
Applying the  bijection
%$ \text{$3$-$\RT$}\bigl( (6,5,5,5,2)/(3,2),$ $(3,3) \bigr) \rightarrow \mSSYT\bigl( 
%\bigl( (1), (2), (2,1) / (1) \bigr), (3,3) \bigr)$
\[ \text{$3$-$\RT$}\bigl( (6,5,5,5,2)/(3,2), (3,3) \bigr) \rightarrow \mSSYT\bigl( 
\bigl( (1), (2), (2,2) / (1) \bigr), (3,3) \bigr) \]
in the proof of~\eqref{eq:mSSYT} we obtain the $3$-multitableaux 
%$\mathbf{T}_1$,
%$\mathbf{T}_2$, $\mathbf{T}_3$, $\mathbf{T}_4$,
\begin{align}
\mathbf{t}_1 = \Bigl(\, \young(2)\,,\ \young(12)\,,  \ \young(:1,12)\, \Bigr),&\quad
\mathbf{t}_2 =\Bigl(\, \young(1)\,,\ \young(22)\,, \ \young(:1,12)\, \Bigr),  \notag \\
\mathbf{t}_3 = \Bigl(\, \young(2)\,,\ \young(11)\,, \ \young(:1,22)\, \Bigr),&\quad
\mathbf{t}_4 = \Bigl(\, \young(1)\,,\ \young(12)\,, \ \young(:1,22)\, \Bigr) \label{eq:t1234}
\end{align}
in the order corresponding to the top line in Figure~3. Here $\mathbf{t}_1$ is latticed
and $\mathbf{t}_2$, $\mathbf{t}_3$, $\mathbf{t}_4$ are not. 
 Applying the involution $G$ in~\S\ref{sec:LS} to $\mathbf{t}_2$,
$\mathbf{t}_3$, $\mathbf{t}_4$,
as in the proof of~\eqref{eq:afterInvol}, we obtain the $3$-multitableaux
\[ \Bigl(\, \young(2)\,,\ \young(22)\,, \ \young(:1,12)\, \Bigr), 
\Bigl(\, \young(2)\,,\ \young(12)\,, \ \young(:1,22)\, \Bigr),
\Bigl(\, \young(1)\,,\ \young(22)\,, \ \young(:1,22)\, \Bigr) \]
in the order corresponding to the bottom line in Figure~3.
As expected, these are the images of the three horizontal $3$-ribbon tableaux 
of shape $(6,5,5,5,2)$ and weight $(2,4)$ under the bijection
\[ \text{$3$-$\RT$}\bigl( (6,5,5,5,2)/(3,2), (2,4) \bigr) \rightarrow \mSSYT\bigl( 
\bigl( (1), (2), (2,1) / (1) \bigr), (2,4) \bigr).\]
Therefore all but one of the seven summands in~\eqref{eq:mSSYT} is cancelled by~$G$. Since
$\sgn_3\bigl( (6,5,5,5,2) / (3,2) \bigr) = 1$, we have 
$\langle s_{(3,2)}(s_{(3,3)} \circ p_3), s_{(6,5,5,5,2)} \rangle = 1$.

We now find $\langle s_{(3,2)}(s_{(4,3)/(1)} \circ p_3), s_{(6,5,5,5,2)} \rangle$
using the full generality of Theorem~\ref{thm:main}.
Following the proof of~\eqref{eq:afterInvol}, we append% the unique
%semistandard tableau of shape $(1)$, namely 
$\;\young(1)\,$ to each of the four $3$-multitableaux in
$\mSSYT\bigl( 
\bigl( (1), (2), (2,1) / (1) \bigr), (3,3) \bigr)$ before applying~$G$.
This gives three latticed $4$-multitableaux, 
\[  \hspace*{-0.1in}\Bigl(\, \young(2)\,,\ \young(12)\,,  \ \young(:1,12)\,, \ \young(1)\,\Bigr),
\Bigl(\, \young(1)\,,\ \young(22)\,, \ \young(:1,12)\,, \ \young(1)\,\Bigr), 
\Bigl(\, \young(2)\,,\ \young(11)\,, \ \young(:1,22)\,, \ \young(1)\, \Bigr), \]
all fixed by $G$, and one unlatticed $4$-multitableau, obtained by appending~$\young(1)$ to $\mathbf{t}_4$;
its image under $G$ is given by
\[ 
\Bigl(\, \young(1)\,,\ \young(12)\,, \ \young(:1,22)\, , \ \young(1)\, 
\Bigr)  \stackrel{G}{\longleftrightarrow}
\Bigl(\, \young(2)\,,\ \young(22)\,, \ \young(:1,22)\,, \ \young(1)\,  \Bigr).
  \]
There are now five summands in~\eqref{eq:mSSYT}, of which two are cancelled by $G$,
and so $\langle s_{(3,2)}(s_{(4,3)/(1)} \circ p_3), s_{(6,5,5,5,2)} \rangle = 3$.
Alternatively, we can get the same result by using
(a very special case of) the Littlewood--Richardson
rule to write $s_{(4,3) / (1)} = s_{(4,2)} + s_{(3,3)}$. From above 
we have $\langle s_{(3,2)}(s_{(3,3)} \circ p_3), s_{(6,5,5,5,2)} \rangle = 1$ and
since $h_{(2,4)} = h_{(4,2)}$, we have
\[  \langle
s_{(3,2)}(h_{(4,2)} \circ p_3), s_{(6,5,5,5,2)} \rangle 
= \bigl| \text{$3$-RT}\bigl( (6,5,5,5,2), (2,4) \bigr) \bigr| 
= 3. \] 
Since %$(1,2) \cdot (4,2) = (1,5)$ and
$\bigl| \text{$3$-RT}\bigl( (6,5,5,5,2), (1,5) \bigr) \bigr| = 1$, we get
$\langle s_{(3,2)}(s_{(4,3)/ (1)} \circ p_3), s_{(6,5,5,5,2)} \rangle = (4-3)+(3-1) = 3$, as before.
This extra cancellation 
suggests that the general form of the SXP rule in Theorem~\ref{thm:main} may have some computational advantages.

\section{Connections with other combinatorial rules} 
\label{sec:SXPp}

\subsection{Non-plethystic rules}\label{subsec:nonPlethystic}

Let $\SSYT(\nu/\tau, \lambda)$ be the set of semistandard skew
 tableaux of shape $\nu/\tau$ and content~$\lambda$. 
We say  that a skew tableau~$t$ is \emph{latticed} if the corresponding skew $1$-multitableau $(t)$
is latticed. Let $\SSYTL(\nu/\tau,\lambda)$ be the set of latticed semistandard
tableaux of shape $\nu/\tau$ and content~$\lambda$.  

Let $\lambda / \mu$ be a skew partition of $n \in \N_0$.
Setting $r=1$ in Theorem~\ref{thm:main} 
we obtain 
\begin{equation} 
s_\tau s_{\lambda / \mu} = \sum_\nu c^{\lambda}_{(\nu/\tau\vthinspace,\thinspace \mu)} s_\nu \label{eq:lrGen} \end{equation}
where the sum is over all
 partitions $\nu$ such that $\nu/\tau$ is a skew partition of~$n$.
(For the remainder of this subsection we usually rely on the context to make such summations clear.)
%This is equivalent to a special case of the skew-skew Littlewood--Richardson
%rule in \cite[\S 4]{RemmelShimozono}. 
Specialising~\eqref{eq:lrGen} 
further by setting $\mu = \varnothing$ we get
\begin{equation}
\label{eq:lrProd}
s_\tau s_\lambda = \sum_\nu c^{\lambda}_{(\nu/\tau)} s_\nu. % = \sum_\nu |\SSYTL(\nu/\tau, \lambda)| s_\nu 
\end{equation}
By definition $c^{\lambda}_{(\nu/\tau)}
= |\SSYTL(\nu/\tau,\lambda)|$. Thus~\eqref{eq:lrProd} is the original
Littlewood--Richardson rule, as proved in \cite[Theorem~III]{LittlewoodRichardson}.

Specialising~\eqref{eq:lrGen} in a different way by setting $\tau = \varnothing$, and then
changing notation for consistency with~\eqref{eq:lrProd}, we get
\begin{equation}\label{eq:lrSkew} s_{\nu/\tau} = \sum_{\lambda} c^\nu_{(\lambda,\tau)} s_\lambda. \end{equation}
By~\eqref{eq:lrProd} and~\eqref{eq:lrSkew}, we have 
\begin{equation} \begin{split} 
 \langle s_\tau s_\lambda, s_\nu \rangle =c^{\lambda}_{(\nu/\tau)} &=|\SSYTL(\nu/\tau, \lambda)|
 \\
 &\quad = |\mSSYTL\bigl( (\lambda, \tau), \nu \bigr)|  = c^\nu_{(\lambda,\tau)} = \langle s_\lambda, s_{\nu/\tau} \rangle \end{split}\label{eq:lrRel} \end{equation}
where the middle equality is proved in Proposition~\ref{prop:shapecontent} in the appendix.
%\[ \begin{split} \langle s_\lambda, s_{\nu/\tau} \rangle = c^\nu_{(\lambda,\tau)} = 
%|\mSSYTL\bigl(& (\lambda, \tau), \nu \bigr)| \\
%&= |\SSYTL(\nu/\tau, \lambda)| = c^{\lambda}_{(\nu/\tau)}
%= \langle s_\tau s_\lambda, s_\nu \rangle.\end{split} \]
%It therefore follows from~\eqref{eq:lrProd} and~\eqref{eq:lrSkew} that
%$\langle s_\tau s_\lambda, s_\nu \rangle = \langle s_\lambda, s_{\nu/\tau} \rangle $.
This gives a  combinatorial proof of the
fundamental adjointness relation for Schur functions. 
%(See
%\cite[7.15.4]{StanleyII} for a more conventional proof.)
By~\eqref{eq:lrGen} and this relation 
we have $\langle  s_{\lambda/\mu}, s_{\nu/\tau} \rangle = c^{\lambda}_{(\nu/\tau,\mu)}$.
If~$\mathbf{t}$ is a latticed skew $2$-multitableaux
of shape $(\nu/\tau, \mu)$ then, as seen in~\eqref{eq:finalFixed},
$\mathbf{t} = (t,u(\mu))$ for some $\nu/\tau$-tableau $t$. Thus
\begin{equation} \langle s_{\nu/\tau}, s_{\lambda/\mu} \rangle = c^{\lambda}_{(\nu/\tau,\mu)} =
\bigl| \bigl\{ t \in \SSYT(\nu/\tau, \lambda - \mu) : \text{$\bigl( t,u(\mu) \bigr)$ is latticed} \bigr\} \bigr|. \label{eq:lrSkewSkew} \end{equation}
This is equivalent to the skew-skew Littlewood--Richardson rule proved in \cite[\S 4]{RemmelShimozono}.
The non-obvious equalities
 $|\SSYTL(\nu/\tau,\lambda)| = |\SSYTL(\nu/\lambda,\tau)|$ and
 $\bigl|\mSSYTL\bigl( (\lambda,\tau), \nu\bigr)\bigr| = 
 \bigl|\mSSYTL\bigl( (\tau,\lambda), \nu\bigr)\bigr|$
are also corollaries of~\eqref{eq:lrRel}.

As a final exercise, we show that our definition of generalized Littlewood--Richardson coefficients
is consistent with the algebraic generalisation of~\eqref{eq:lrGen}
%the equality $\langle s_\tau s_\lambda, s_\nu \rangle = c^{\nu}_{(\lambda,\tau)}$ in~\eqref{eq:lrRel}
to arbitrary products of Schur functions.

\begin{lemma}\label{lemma:assoc}
Let $m \in \N$.
If $\sigmanub/\taub$ is a skew $m$-multipartition of $n \in \N_0$ and~$\lambda$ is a partition of $n$ then
\[ s_{\sigmanu(1)/\tau(1)} \ldots s_{\sigmanu(m)/\tau(m)} = 
\sum_{\lambda} c^{\lambda}_{\sigmanub/\taub} \]
where the sum is over all partitions $\lambda$ of $n$.
%{(\sigma(1)/\tau(1),\ldots,\sigma(m)/\tau(m))} s_{\lambda}. \]
%where the sum is over all partitions $\lambda$ of $n$.
\end{lemma}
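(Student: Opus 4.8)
The plan is to prove the multi-Schur product formula by induction on $m$, using the two-term case \eqref{eq:lrGen} as the base step and the associativity of multiplication in the ring of symmetric functions for the inductive step. The key point to establish is that our combinatorial definition $c^{\lambda}_{\sigmanub/\taub} = \bigl|\mSSYTL(\sigmanub/\taub,\lambda)\bigr|$ satisfies the right branching identity, namely that $\mSSYTL$ for an $m$-multipartition can be built up from $\mSSYTL$ for an $(m{-}1)$-multipartition together with a final lattice tableau of some partition shape. Concretely, I would write $s_{\sigmanu(1)/\tau(1)}\cdots s_{\sigmanu(m)/\tau(m)}$ as $\bigl(s_{\sigmanu(1)/\tau(1)}\cdots s_{\sigmanu(m-1)/\tau(m-1)}\bigr)\, s_{\sigmanu(m)/\tau(m)}$, apply the inductive hypothesis to the bracketed factor, then apply \eqref{eq:lrGen} (in the form $s_{\sigmanu(m)/\tau(m)} = \sum_{\kappa} c^{\sigmanu(m)}_{(\kappa,\tau(m))} s_\kappa$, which is the skew-expansion \eqref{eq:lrSkew}) together with \eqref{eq:lrGen} again to multiply by each intermediate $s_\kappa$.

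\textbf{Key steps.} First I would fix notation: let $k$ be a partition of some $j \le n$, and write the $(m{-}1)$-multipartition $\sigmanub'/\taub' = (\sigmanu(1)/\tau(1),\ldots,\sigmanu(m-1)/\tau(m-1))$. By induction, $s_{\sigmanu(1)/\tau(1)}\cdots s_{\sigmanu(m-1)/\tau(m-1)} = \sum_{\kappa} c^{\kappa}_{\sigmanub'/\taub'} s_\kappa$ where $\kappa$ runs over partitions of $n - (|\sigmanu(m)| - |\tau(m)|)$. Next, using \eqref{eq:lrGen} to multiply $s_\kappa$ by $s_{\sigmanu(m)/\tau(m)}$ — more precisely, using the identity $\langle s_\kappa s_{\sigmanu(m)/\tau(m)}, s_\lambda\rangle = c^{\lambda}_{(\sigmanu(m)/\tau(m),\,\kappa)} = \bigl|\mSSYTL\bigl((\sigmanu(m)/\tau(m),\kappa),\lambda\bigr)\bigr|$ from \eqref{eq:lrSkewSkew}-type reasoning, or just iterating \eqref{eq:lrGen} — I would obtain
\[ s_{\sigmanu(1)/\tau(1)}\cdots s_{\sigmanu(m)/\tau(m)} = \sum_\lambda \Bigl( \sum_\kappa c^{\kappa}_{\sigmanub'/\taub'}\, \bigl|\mSSYTL\bigl((\sigmanu(m)/\tau(m),\kappa),\lambda\bigr)\bigr| \Bigr) s_\lambda. \]
Then the crux is a bijection: a latticed $m$-multitableau in $\mSSYTL(\sigmanub/\taub,\lambda)$ decomposes, by Proposition~\ref{prop:LS} applied appropriately, as a pair consisting of a latticed $(m{-}1)$-multitableau of shape $\sigmanub'/\taub'$ and content some partition $\kappa$, together with data matching $\mSSYTL\bigl((\sigmanu(m)/\tau(m),\kappa),\lambda\bigr)$. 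This should follow from the observation that $\mathbf{t} = (t(1),\ldots,t(m))$ is latticed iff the prefix word $\w(t(1))\cdots\w(t(m-1))$ is a lattice word (so its content, after standardizing, is a partition $\kappa$, and $(t(1),\ldots,t(m-1))$ lies in $\mSSYTL(\sigmanub'/\taub',\kappa)$) and the full word is a lattice word — which, given the prefix is, is exactly the condition that $(u(\kappa), t(m))$-type data, equivalently $t(m)$ read after a lattice word of content $\kappa$, remains latticed; that count is $\bigl|\mSSYTL\bigl((t(m)\text{'s shape }, \kappa),\lambda\bigr)\bigr|$ once we note a latticed multitableau's prefix can be replaced by the canonical $u(\kappa)$ without changing which completions are latticed. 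Summing the resulting bijection over $\kappa$ identifies $\bigl|\mSSYTL(\sigmanub/\taub,\lambda)\bigr|$ with the parenthesized sum above, which is $c^{\lambda}_{\sigmanub/\taub}$ by definition.

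\textbf{Main obstacle.} The hard part will be the bijection in the last step — specifically, the claim that replacing the lattice-word prefix of content $\kappa$ by the canonical row-superstandard tableau $u(\kappa)$ does not change the set of latticed completions by $t(m)$. This is essentially a coplactic-invariance statement: the lattice property of a concatenation $w\, \w(t(m))$ depends on $w$ only through the multiset of $k$-unpaired records it leaves, which for a lattice word $w$ of content $\kappa$ agrees with those of $\w(u(\kappa))$. I would justify this via Lemma~\ref{lemma:unpairedsubword} and the fact (implicit in Proposition~\ref{prop:LS}) that the $G$-involution fixes lattice words and commutes with appending a fixed suffix tableau, so the count of latticed objects is preserved; alternatively one can cite that the $c^\lambda_{\sigmanub/\taub}$ are already known to be associative via their identification with iterated Littlewood--Richardson coefficients through the $r=1$ case of Theorem~\ref{thm:main} combined with \eqref{eq:lrRel}. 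Everything else is bookkeeping with \eqref{eq:lrGen} and the ring structure.
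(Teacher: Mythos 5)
Your algebraic skeleton (induction plus adjointness plus the two-factor identity \eqref{eq:lrSkewSkew}) is the same as the paper's, but you peel off the \emph{last} factor, and the combinatorial decomposition you then need is based on a false claim. It is not true that a latticed $m$-multitableau has a latticed prefix: a $k+1$ in the prefix may be paired with a $k$ lying in a later component, so truncating the word can create unpaired $(k+1)$'s. Concretely, for shape $\bigl((1),(1)\bigr)$ and content $(1,1)$ the unique latticed $2$-multitableau has first component the single entry $2$ and second component the single entry $1$ (word $21$); its prefix, the single entry $2$, is not latticed and does not even have partition content. So your asserted ``$\mathbf{t}$ is latticed iff the prefix word is latticed and the full word is latticed'' fails, and with it the bijection identifying $\bigl|\mSSYTL(\sigmanub/\taub,\lambda)\bigr|$ with $\sum_\kappa c^\kappa_{\sigmanub'/\taub'}\bigl|\mSSYTL\bigl((\sigmanu(m)/\tau(m),\kappa),\lambda\bigr)\bigr|$. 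The replacement-by-$u(\kappa)$ step compounds the problem by reversing the concatenation order: in $\mSSYTL\bigl((\sigmanu(m)/\tau(m),\kappa),\lambda\bigr)$ the canonical tableau $u(\kappa)$ comes \emph{after} the skew component (word $\w(s)\w(u(\kappa))$), whereas in your decomposition the content-$\kappa$ word comes \emph{before} $\w(t(m))$; since a $k+1$ can only pair with a $k$ to its right, prepending a latticed word changes nothing about the pairing status of the entries that follow it, so ``latticed completions of a latticed prefix of content $\kappa$'' are simply the $t(m)$ whose own word is latticed --- not the set you need. The lattice condition is genuinely one-sided here: suffixes of latticed words are latticed, prefixes need not be, and (as \S\ref{subsec:plethystic} notes) permuting the components of a multitableau does not preserve the lattice property, so the symmetry your route implicitly needs is not available at this stage except as a numerical consequence of the lemma itself. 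Your fallback of citing associativity of the $c^\lambda_{\sigmanub/\taub}$ via iterated Littlewood--Richardson coefficients is circular, since that identification for $m\ge 3$ is exactly what the lemma asserts.

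The repair is to peel off the \emph{first} factor, as the paper does: expand $s_{\sigmanu(2)/\tau(2)}\cdots s_{\sigmanu(m)/\tau(m)}=\sum_\gamma c^\gamma_{(\sigmanu(2)/\tau(2),\ldots,\sigmanu(m)/\tau(m))}s_\gamma$ by induction, then use adjointness and \eqref{eq:lrSkewSkew} to write $\langle s_{\sigmanu(1)/\tau(1)}s_\gamma,s_\lambda\rangle=c^\lambda_{(\sigmanu(1)/\tau(1),\gamma)}$, and finally note that concatenation $\bigl(\bigl(t,u(\gamma)\bigr),\mathbf{t}\bigr)\mapsto(t:\mathbf{t})$ is a bijection onto $\mSSYTL(\sigmanub/\taub,\lambda)$. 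This works precisely because the suffix $(t(2),\ldots,t(m))$ of a latticed multitableau is latticed, hence has partition content $\gamma$, and because a latticed suffix of content $\gamma$ may be exchanged with $u(\gamma)$ without altering which entries of $\w(t(1))$ are $k$-paired (the only data the suffix contributes is its $\gamma_k-\gamma_{k+1}$ unpaired $k$'s, by Lemma~\ref{lemma:unpairedsubword}). That is the legitimate form of the coplactic-invariance you were reaching for; it is valid for suffixes, not prefixes.
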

\begin{proof}
%For ease of notation we give the proof in the case $s=3$. It is easily generalized.
%By~\eqref{eq:LR} we have
%\[
%\langle s_{\sigma(1)} \ldots s_\sigma(s), s_\lambda\rangle = \sum_{\gamma(1), \gamma(2), \sigma(3)} c^{\gamma(1)}_{\gamma(2)\sigma(1)} \ldots 
%c^{\gamma(s-2)}_{\gamma(s-1)\sigma(s-2)} c^{\gamma(s-1)}_{\gamma(s)\sigma(s-1)} \]
By induction, the fundamental adjointness relation and~\eqref{eq:lrSkewSkew} we have
\begin{align*}
\langle s_{\sigmanu(1)/\tau(1)} s&_{\sigmanu(2)/\tau(2)} \ldots s_{\sigmanu(m)/\tau(m)}, s_\lambda \rangle \\
&\ \ =
\langle \sum_{\gamma} s_{\sigmanu(1)/\tau(1)} c_{((\sigmanu(2)/\tau(2), \ldots, \sigmanu(m)/\tau(m))}^{\gamma}
s_\gamma, s_\lambda \rangle \\
&\ \ = \sum_{\gamma} \langle s_{\sigmanu(1)/\tau(1)}, s_{\lambda/\gamma} \rangle 
c_{((\sigmanu(2)/\tau(2), \ldots, \sigmanu(m)/\tau(m))}^{\gamma} \\
&\ \ = \sum_{\gamma}  c_{(\sigmanu(1)/\tau(1),\gamma)}^{\lambda} \,
c_{((\sigmanu(2)/\tau(2), \ldots, \sigmanu(m)/\tau(m))}^{\gamma}
\end{align*}
where the sums are over all partitions $\gamma$ of $n-(|\sigmanu(1)|-|\tau(1)|)$.
The right-hand side
counts the number of pairs of  semistandard skew multitableaux
$\bigl( \bigl( t, u(\gamma) \bigr), \mathbf{t}\bigr)$ such that
$t \in \SSYTL(\sigmanu(1)/\tau(1), \lambda-\gamma)$ and $\mathbf{t} \in
\mSSYTL\bigl( \bigl(\sigmanu(2)/\tau(2),\ldots,\sigmanu(m)/\tau(m)\bigr),\gamma\bigr)$.
Such pairs are in bijection with $\mSSYTL\bigl(
\bigl( \sigmanu(1)/\tau(1),\ldots,\sigmanu(m)/\tau(m)\bigr),\lambda\bigr)$
by the map sending $\bigl( \bigl( t, u(\gamma) \bigr), \mathbf{t}\bigr)$ to the
concatenation $(t : \mathbf{t})$.
The lemma follows.
\end{proof}

\subsection{Plethystic rules}\label{subsec:plethystic}

By Theorem~\ref{thm:main} and the fundamental adjointness relation, we have
$\langle  s_{\lambda} \circ p_r,  s_{\nu/\tau} \rangle = \sgn_r(\nub/\taub) 
c^\lambda_{\nub/\taub}$.
% if $\nu/\tau$ is $r$-decomposable (in the sense
%defined in \S\ref{sec:background}), and 
%$\langle  s_{\lambda} \circ p_r,  s_{\nu/\tau} \rangle = 0$ otherwise.
Hence, by Lemma~\ref{lemma:assoc},
\begin{equation}\label{eq:padjoint} \langle s_{\lambda} \circ p_r, s_{\nu/\tau} \rangle = 
\begin{cases} \langle
s_\lambda, s_{\nu(0)/\tau(0)} \ldots s_{\nu(r-1)/\tau(r-1)} \rangle 
& \text{if $\nu/\tau$ is $r$-decomposable}
\\
0 & \text{otherwise.}\end{cases} \end{equation}
This adjointness relation was first
proved in \cite{KSW}: for a more recent proof see \cite[after (39)]{DesarmenienLeclercThibon}.
It is perhaps 
a little surprising that~\eqref{eq:padjoint}
implies that the absolute value of the 
coefficient
of $s_{(\nub/\taub,\tau)^\star}$ in $s_\tau(s_\lambda \circ p_r)$, namely
%\[ 
$c_{\nub/\taub}^\lambda = |\mSSYTL(\nub/\taub, \lambda)|$, % \]
is the same for all $r!$ permutations of the $r$-quotient $\nub/\taub$.

Note that we obtain only a numerical equality: even cyclic permutations 
of skew $r$-multitableaux, do not,
in general preserve the lattice property. For example,
changing the abaci in Figure~2 in \S\ref{sec:example}
so that $7$ beads are used to represent $(3,2)$ and $(6,5,5,5,2)$
induces a rightward cyclic shift of the skew tableaux forming the skew $3$-multitableaux 
$\mathbf{t}_1, \mathbf{t}_2, \mathbf{t}_3, \mathbf{t}_4$. 
After one or two such shifts,
the unique latticed skew $3$-multitableaux are
the shifts of~$\mathbf{t}_3$ and~$\mathbf{t}_2$, respectively;
$\mathbf{t}_4$ remains unlatticed after any number of shifts. 
The identification of $\mathbf{t}_1$ 
as the unique skew $3$-multitableau contributing to the 
coefficient of $s_{(6,5,5,5,2)}$ in $s_{(3,2)}(s_{(3,3)} \circ p_3)$
is therefore canonical, but not entirely natural.

The author is aware of two combinatorial
rules in the literature for special cases of the product $s_\tau(s_\lambda \circ p_r)$
that avoid this undesirable feature of the SXP rule.
To state the first, which is due to Carr{\'e} and Leclerc,  we 
need a definition from \cite{CarreLeclerc}. Let $T$ be an $r$-ribbon
tableau of shape $\nu/\tau$ and weight~$\lambda$. % as in~\eqref{eq:ribbons}. 
Represent $T$, as in FIgure~3, by a tableau of shape $\nu/\tau$ in which
the boxes of the $\alpha_j$ disjoint $r$-border strips forming the horizontal $r$-ribbon in $T$
labelled~$j$
all contain~$j$.
The \emph{column word}
of~$T$ is the word of length~$n$
obtained by reading the columns of this tableau from bottom to top, starting at 
the leftmost column, and recording the label of 
each $r$-border strip when it is first seen, in its leftmost column.

\begin{theorem}[\protect{\cite[Corollary 4.3]{CarreLeclerc}}]\label{thm:CL}
Let $r \in \N$ and let $n \in \N_0$.
Let $\nu / \tau$ be a skew partition of $rn$ and let~$\lambda$ be a partition of $n$.
Up to the sign $\sgn_2(\nu/\tau)$, the multiplicity $\langle s_\tau(s_\lambda \circ p_2), s_\nu 
\rangle$ is equal to the number of $2$-ribbon tableaux~$T$ of shape $\nu/\tau$ and weight $\lambda$
whose column word is latticed.
\end{theorem}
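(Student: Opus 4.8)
The plan is to derive Theorem~\ref{thm:CL} from the $r=2$ case of Theorem~\ref{thm:main} by producing a sign-preserving bijection between the $2$-ribbon tableaux $T$ of shape $\nu/\tau$ and weight $\lambda$ whose column word is latticed, and the set $\mSSYTL(\nub/\taub,\lambda)$ of latticed semistandard skew $2$-multitableaux of shape $\nub/\taub$, where $\nub/\taub$ is the $2$-quotient of $\nu/\tau$. By Theorem~\ref{thm:main} with $\tau$ as given, $\mu = \varnothing$ and $r=2$, the multiplicity $\langle s_\tau(s_\lambda\circ p_2), s_\nu\rangle$ equals $\sgn_2(\nu/\tau)\,c^\lambda_{\nub/\taub} = \sgn_2(\nu/\tau)\,|\mSSYTL(\nub/\taub,\lambda)|$ (and is zero unless $\nu/\tau$ is $2$-decomposable, in which case there are no $2$-ribbon tableaux of that shape either, so both sides vanish). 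So the whole content of the theorem is the equality $|\{T : \text{column word latticed}\}| = |\mSSYTL(\nub/\taub,\lambda)|$.

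First I would recall the bijection already constructed in the proof of~\eqref{eq:mSSYT}: a $2$-ribbon tableau $T$ of shape $\nu/\tau$ and weight $\lambda$ corresponds, via the abacus with an even number of beads, to a pair $\bigl(t(0),t(1)\bigr)\in\mSSYT(\nub/\taub,\lambda)$, where the box of $\nu(i)/\tau(i)$ recording the $j$-th bead move on runner $i$ carries the label $j$ of the horizontal ribbon strip containing the corresponding $2$-border strip. The substance of the argument is then to show that, under this correspondence, the column word of $T$ (in the sense of Carr\'e--Leclerc) agrees, as an abstract word, with the word $\w\bigl(t(0)\bigr)\w\bigl(t(1)\bigr)$ of the $2$-multitableau — or at least that the two words have the same multiset of $k$-unpaired entries in compatible positions, so that one is latticed exactly when the other is. This is the point where one must look carefully at how columns of the ribbon-strip picture of $T$ on $\nu/\tau$ translate into the rows of $t(0)$ and $t(1)$ on the quotient shapes; the ordering ``columns bottom-to-top, leftmost first, recording a strip at its leftmost column'' should match up with ``rows of $t(1)$ left-to-right from the top, then rows of $t(0)$,'' because moving a bead on runner~$1$ before one on runner~$0$ at the same abacus level corresponds to a border strip whose leftmost column lies to the left. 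I would establish this by a direct analysis of $2$-border strips: each occupies two adjacent diagonals of $\nu$, its leftmost column is determined by the runner and the bead position, and the column-reading order on $\nu/\tau$ is exactly the order in which the abacus bead moves are performed if one performs all runner-$1$ moves before runner-$0$ moves and within a runner in increasing order of starting position (which is the canonical order from Lemma~\ref{lemma:ribboneqv}).

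The main obstacle I anticipate is precisely this combinatorial identification of the column word with the multitableau word: the Carr\'e--Leclerc column word is defined by a geometric reading rule on the ambient shape $\nu/\tau$, whereas our word lives on the quotient, and matching them requires tracking the diagonal-to-runner correspondence and checking that the tie-breaking conventions (``first seen in leftmost column'' versus the row-reading convention $\w$, which reads highest-numbered row first) line up for both runners simultaneously. A clean way to handle this is to reduce to the case where $\lambda$ has all distinct parts so every label appears once, verify the word identity there by a small diagram, and then note that both sides are insensitive to merging equal parts (both the column word and $\w$ just repeat labels within a horizontal strip in a way that is determined by column position, so collapsing labels does not change the lattice property). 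Once the word identity is in hand, the theorem is immediate: $T$ has latticed column word $\iff$ $\bigl(t(0),t(1)\bigr)$ is latticed $\iff$ $\bigl(t(0),t(1)\bigr)\in\mSSYTL(\nub/\taub,\lambda)$, and the bijection of~\eqref{eq:mSSYT} is clearly sign-preserving since $\sgn_2(\nu/\tau)$ depends only on the shape, giving $\langle s_\tau(s_\lambda\circ p_2),s_\nu\rangle = \sgn_2(\nu/\tau)\cdot\#\{T : \text{column word latticed}\}$ as claimed.
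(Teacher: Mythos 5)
There is a genuine gap at the heart of your plan: the claim that, under the abacus bijection used in the proof of~\eqref{eq:mSSYT}, the Carr\'e--Leclerc column word of a $2$-ribbon tableau $T$ agrees with the word of the corresponding $2$-multitableau $\bigl(t(0),t(1)\bigr)$ (or at least has matching unpaired entries, so that the two lattice conditions coincide) is false. The paper's own example immediately after the statement of Theorem~\ref{thm:CL} is a counterexample: of the two $2$-ribbon tableaux of shape $(5,5,2,2)/(3,1)$ and weight $(3,1,1)$ with latticed column word (Figure~4), the first corresponds under the quotient bijection to the $2$-multitableau of shape $\bigl((3,1)/(2),(2,1)\bigr)$ with word $11312$, which is \emph{not} latticed in the multitableau sense. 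So the bijection of~\eqref{eq:mSSYT} does not carry the set of column-word-latticed ribbon tableaux onto $\mSSYTL(\nub/\taub,\lambda)$; the two sets merely have the same cardinality. This discrepancy is not a technicality to be fixed by adjusting the reading order of $t(0)$ and $t(1)$ or by reducing to $\lambda$ with distinct parts: the whole point of \S\ref{subsec:plethystic} (and of Problem~\ref{prob:gen}) is that the column-word condition is a ``global'' condition on $\nu/\tau$ that is genuinely different from the quotient-based lattice condition, and that the identification of the latticed multitableau is canonical but not natural (it even changes under cyclic shifts of the quotient). A derivation of Theorem~\ref{thm:CL} from Theorem~\ref{thm:main} would therefore require a new bijection or sign-reversing involution relating the two sets, not an elementwise identification of words; constructing such an argument is essentially the content of Carr\'e and Leclerc's proof.

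Note also that the paper does not prove Theorem~\ref{thm:CL} at all: it is quoted from \cite[Corollary~4.3]{CarreLeclerc}, and the surrounding discussion (Proposition~\ref{prop:RNTCW}, the table of counterexamples, and Problem~\ref{prob:gen}) is devoted precisely to the fact that such global lattice conditions do not follow in any obvious way from the SXP-type rule of Theorem~\ref{thm:main}. Your opening reduction is fine --- by Theorem~\ref{thm:main} with $\mu=\varnothing$ and $r=2$ the multiplicity is $\sgn_2(\nu/\tau)\,\bigl|\mSSYTL(\nub/\taub,\lambda)\bigr|$, so it suffices to show $\bigl|\{T : \text{column word latticed}\}\bigr| = \bigl|\mSSYTL(\nub/\taub,\lambda)\bigr|$ --- but the step you propose to close that equality is exactly the step that fails.
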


% These $2$-ribbon tableaux are called \emph{domino tableaux}
%in \cite{CarreLeclerc}.
For example, there are two
$2$-ribbon tableaux of shape $(5,5,2,2)/(3,1)$ and content $(3,1,1)$
having a latticed column word 
(see Figure~4 overleaf), and so
 $\langle s_{(3,1)}(s_{(3,1,1)} \circ p_2), s_{(5,5,2,2)} \rangle = 2$.
The corresponding skew $2$-multitableaux of shape $\bigl( (3,1)/(2), (2,1) \bigr)$ 
are
\[ \Bigl(\, \young(::1,1)\,,\ \young(12,3)\, \Bigr),\quad  
   \Bigl(\, \young(::1,3)\,,\ \young(11,2)\, \Bigr), \]
respectively. Only the second is latticed in the multitableau sense.

\begin{figure}[t]
\begin{center}
\includegraphics{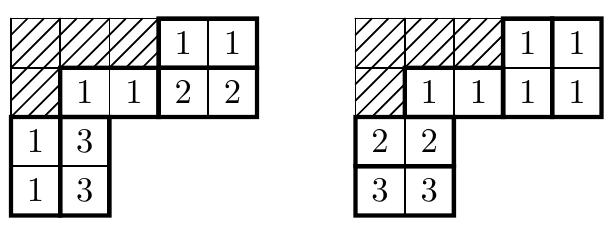}
\end{center}
\caption{\small The two $2$-ribbon tableaux of shape $(5,5,2,2)/(3,1)$ and content $(3,1,1)$
whose column words, namely $13121$ and $32111$, are latticed.}
\end{figure}

In Theorem 6.3 of \cite{EvseevPagetWildon}, Evseev, Paget and the  author
applied character theoretic arguments to the case $\lambda = (a,1^b)$,
considering arbitrary $r \in \N$. To restate this result in our setting, we 
introduce the following definition.

\begin{definition}
The \emph{row-number tableau}  of an $r$-ribbon tableau $T$ is 
the row-standard tableau $\RNT(T)$ defined
by putting an entry $i$ in row $a$ of $\RNT(T)$ for each
$r$-border strip of row number $a$ in the $r$-ribbon strip of $T$ labelled~$i$.
\end{definition}

If $T$ has weight $\lambda$ then the content of $\RNT(T)$ is $\lambda$. 
The shape of $\RNT(T)$ is in general a composition, possibly with some zero parts.
The row-number tableaux of the four $3$-ribbon tableaux in 
$3$-$\RT\bigl( (6,5,5,5,2)/(3,2),(3,3) \bigr)$, shown in the top line of Figure~3 in \S\ref{sec:example},
are
\[ \begin{matrix}\young(1,222,11)\\[23pt] \end{matrix}\,,  \qquad \begin{matrix}\young(11)\hfill\\[13pt]\young(122,2)\end{matrix}\,,\qquad
\young(1,122,1,2)\,,    \qquad \begin{matrix}\young(11)\hfill\\[13pt]\young(1222)\\[11pt]\end{matrix}\,. \]

The definition of latticed extends to row-number tableaux in the obvious way.
The second row-number tableau above, with word $212211$, is the only one that is latticed.
%In Figure~4, the row-number tableaux 

\begin{corollary}[see \protect{\cite[Theorem 6.3]{EvseevPagetWildon}}]\label{cor:EPW}
Let $r \in \N$, let $a \in \N$ and let $b \in \N_0$. Let $\nu/\tau$ be a skew partition of $r(a+b)$.
Then
$\langle s_\tau (s_{(a,1^b)} \circ p_r), s_\nu \rangle$
is equal, up to the sign $\sgn_r(\nu/\tau)$, 
to the number of $r$-ribbon tableaux of shape $\nu/\tau$ and weight $(a,1^b)$ whose
row-number tableau is latticed. The column word of such an $r$-ribbon tableau is $(b+1)b \ldots 
21\ldots 1$,   where the number of~$1$s is~$a$.
\end{corollary}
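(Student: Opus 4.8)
The plan is to derive this corollary directly from Theorem~\ref{thm:main}, using the special structure of $\lambda = (a,1^b)$, and then to translate the combinatorial data via the chain of bijections established in \S\ref{sec:proof}. By Theorem~\ref{thm:main} with $\mu = \varnothing$ and the fundamental adjointness relation, $\langle s_\tau(s_{(a,1^b)} \circ p_r), s_\nu \rangle$ equals $\sgn_r(\nu/\tau)$ times $c^{(a,1^b)}_{\nub/\taub} = |\mSSYTL(\nub/\taub, (a,1^b))|$, where $\nub/\taub$ is the $r$-quotient of the $r$-decomposable skew partition $\nu/\tau$ (and both sides vanish if $\nu/\tau$ is not $r$-decomposable). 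So the task reduces to counting latticed semistandard skew $r$-multitableaux of content $(a,1^b)$, and then matching that count against $r$-ribbon tableaux of weight $(a,1^b)$ with latticed row-number tableau.

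First I would analyse $\mSSYTL(\nub/\taub,(a,1^b))$. A semistandard skew $r$-multitableau $\mathbf{t}$ of content $(a,1^b)$ contains $a$ entries equal to $1$ and one entry each of $2, \ldots, b+1$. The lattice condition on $\w(\mathbf{t})$ forces, reading right to left, that the first entry is $1$, and that each value $k+1$ does not outnumber $k$ at any prefix read from the right; combined with semistandardness within each $t(i)$, this is a strong constraint. I expect the key reduction to be: a multitableau of content $(a,1^b)$ is latticed if and only if its word, read appropriately, is exactly $(b+1)\,b\,\cdots\,2\,\underbrace{1\cdots 1}_{a}$ up to the forced positions — i.e. the single copies of $2,\dots,b+1$ must occur in strictly decreasing order as one reads $\w(\mathbf{t})$ left to right, with all the $1$s afterwards. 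This is where the column-word statement enters: under the bijection of \eqref{eq:mSSYT} (the correspondence between $r$-ribbon tableaux and skew $r$-multitableaux via abacus moves), the word $\w(\mathbf{t})$ of the multitableau should correspond, after the column-reading reorganisation, to the column word of the $r$-ribbon tableau as defined before Theorem~\ref{thm:CL}. So I would verify that $\mathbf{t}$ is latticed precisely when the column word of the corresponding $r$-ribbon tableau $T$ is $(b+1)b\cdots 21\cdots 1$.

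Next I would bring in the row-number tableau. The point of Corollary~\ref{cor:EPW} is that for weight $(a,1^b)$ the relevant statistic is cleaner to phrase via $\RNT(T)$ than via the column word. I would show: for an $r$-ribbon tableau $T$ of weight $(a,1^b)$, the column word of $T$ equals $(b+1)b\cdots 21\cdots 1$ if and only if $\RNT(T)$ is latticed. The forward direction should follow because once the $r$-border strips carrying labels $2,\ldots,b+1$ are forced into a "staircase" configuration by the column word, their row numbers must be weakly decreasing in the order $b+1, b, \ldots, 2$, i.e. row number of label $k+1$ is at least that of label $k$ — wait, I need to be careful with the direction: the lattice condition on $\RNT(T)$, reading its word, should match the condition that in $T$ the border strip of label $k+1$ sits weakly above (smaller row number than or equal to) the one of label $k$ for the single-entry labels, which is exactly what the column-word condition encodes. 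I would make this precise by comparing the two readings directly. The other direction is similar. Then the count $|\mSSYTL(\nub/\taub,(a,1^b))|$ equals the number of $r$-ribbon tableaux of shape $\nu/\tau$ and weight $(a,1^b)$ whose row-number tableau is latticed, which together with the sign factor $\sgn_r(\nu/\tau)$ from Theorem~\ref{thm:main} gives the statement.

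\textbf{Main obstacle.} I expect the crux to be the precise dictionary between the three readings — the word $\w(\mathbf{t})$ of the skew $r$-multitableau (reading rows), the column word of the $r$-ribbon tableau $T$ (reading columns of a differently-shaped object), and the word of $\RNT(T)$ (reading rows of yet another object) — and showing that the lattice property is preserved or appropriately transformed at each translation. This is exactly the kind of bookkeeping where an off-by-one in a row number, or a subtlety about when an $r$-border strip is "first seen" in its leftmost column, can derail the argument; citing Lemma~\ref{lemma:ribboneqv} to pin down the unique decomposition into $r$-border strips, and the abacus description in the proof of \eqref{eq:mSSYT}, should control it. The specialisation of the lattice condition to content $(a,1^b)$ (only one copy each of $2,\ldots,b+1$) is what makes all three descriptions collapse to the single explicit word $(b+1)b\cdots 21\cdots 1$, so the argument genuinely uses the hook shape and would not go through for general $\lambda$.
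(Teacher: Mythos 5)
Your proposal takes a genuinely different route from the paper, and unfortunately the route has a gap at its central step. The paper does not deduce Corollary~\ref{cor:EPW} from Theorem~\ref{thm:main} at all: it imports the multiplicity formula from the character-theoretic result \cite[Theorem~6.3]{EvseevPagetWildon}, stated there in terms of $(a,1^b)$-like border-strip $r$-diagrams, and the written proof is purely a translation of those diagrams into $r$-ribbon tableaux with latticed row-number tableau (decomposing $T$ into the horizontal strip labelled $1$ and a vertical strip formed by the strips labelled $2,\ldots,b+1$), from which the column-word claim is also read off. A self-contained derivation from Theorem~\ref{thm:main} of the kind you sketch would be a real addition --- but it cannot be carried out the way you describe.

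The fatal step is your claim that the bijection in the proof of \eqref{eq:mSSYT} transports the lattice property: that $\mathbf{t}$ is latticed precisely when the column word of the corresponding $r$-ribbon tableau $T$ is $(b+1)b\cdots 2\,1^a$ (equivalently, when $\RNT(T)$ is latticed). This element-wise transfer fails even for hook weights, and the paper's own example after Theorem~\ref{thm:CL} shows it. There $r=2$, $\nu/\tau=(5,5,2,2)/(3,1)$ and the weight is $(3,1,1)$, i.e.\ $a=3$, $b=2$, with multiplicity $2$; so by Theorem~\ref{thm:main} there are exactly two latticed multitableaux of the quotient shape and content $(3,1,1)$. The paper exhibits the two ribbon tableaux with latticed column word ($13121$ and $32111$) together with their multitableaux under \eqref{eq:mSSYT}, and states that only one of those multitableaux is latticed. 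Consequently the second latticed multitableau corresponds to a ribbon tableau whose column word is not even latticed, in particular not equal to $32111$; and conversely the first displayed ribbon tableau has latticed column word (hence latticed $\RNT$ by Proposition~\ref{prop:RNTCW}) while its multitableau is not latticed. So neither implication of your proposed dictionary holds, and the equality of cardinalities you need is not induced by the \eqref{eq:mSSYT} bijection --- establishing that numerical coincidence is exactly the nontrivial content of the corollary, which the paper obtains from \cite{EvseevPagetWildon} rather than from Theorem~\ref{thm:main}. This is precisely the ``canonical, but not entirely natural'' phenomenon discussed in \S\ref{subsec:plethystic}, and finding a lattice-type condition on ribbon tableaux themselves that matches the multitableau count is in essence Problem~\ref{prob:gen}. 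A smaller inaccuracy: for content $(a,1^b)$ the lattice condition on $\w(\mathbf{t})$ requires the single entries $b+1,\ldots,2$ to occur in decreasing order with at least one $1$ to the right of the $2$; it does not force all $a$ ones to come at the end of the word.
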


\begin{proof}
By Theorem 6.3 in \cite{EvseevPagetWildon}, up to
the sign $\sgn_r(\nu/\tau)$, the multiplicity $\langle s_\tau (s_{(a,1^b)} \circ p_r), s_\nu \rangle$ 
is the number of
$(a,1^b)$-like border-strip $r$-diagrams of shape $\nu/\tau$, 
as defined in \cite[Definition 6.2]{EvseevPagetWildon}.  
(The required translation
from character theory to symmetric functions is outlined in \cite[\S 7]{EvseevPagetWildon}.)
To relate these objects to $r$-ribbon tableaux, %it will be convenient to 
we define a skew partition $\rho/\tau$ to be a
\emph{vertical $r$-ribbon strip} if $\rho'/\tau'$ is a horizontal $r$-ribbon strip.

Let $T$ be an $r$-ribbon tableau of shape $\nu/\tau$ and weight $(a,1^b)$. There 
is a unique partition $\rho$ such that $\rho/\tau$ is the horizontal $a$-ribbon
strip in $T$ and $\nu/\rho$ is a vertical $b$-ribbon strip, formed from the border strips
labelled $2$, \ldots, $b+1$. Suppose $\RNT(T)$ is latticed. Then 
the row numbers of these border strips are increasing. Moreover,
the rightmost border strip in either of the ribbons $\rho/\tau$ and $\nu/\rho$ 
lies in the Young diagram of $\rho/\tau$,
and the skew partition formed from this border strip and $\nu/\rho$ is a vertical $(b+1)$-ribbon strip.
Therefore $T$ corresponds to an $(a,1^b)$-like border-strip $r$-diagram of shape~$\nu/\tau$,
and the column word of $T$ is as claimed.
 Conversely, each such $r$-ribbon tableau arises in this way.
\end{proof}

The second claim in Corollary~\ref{cor:EPW} implies that
if $T$ is an $r$-ribbon tableau of weight $(a,1^b)$ whose row-number tableau $\RNT(T)$ is latticed,
then the word of $\RNT(T)$ agrees with the column word of $T$.
Hence the combinatorial rules for  $\langle s_\tau (s_{(a,1^b)} \circ p_2), s_\nu \rangle$ 
obtained by taking $\lambda = (a,1^b)$ 
in Corollary 4.3 of \cite{CarreLeclerc} or $r=2$ in Corollary~\ref{cor:EPW}
count the same sets of $r$-ribbon tableaux.
For example, in Figure~4 we have $a=3$ and $b=2$; 
the first $2$-ribbon tableau 
has a horizontal $2$-ribbon strip of shape $(5,3,1,1)/ (3,1)$, a vertical
$2$-ribbon strip of shape  $(5,5,2,2) / (5,3,1,1)$, and the augmented vertical
$2$-ribbon strip has shape $(5,5,2,2) / (3,3,1,1)$. 

For general weights we have the following result. 

\begin{proposition}\label{prop:RNTCW}
Let $r \in \N$. Let $T$ be a 
$r$-ribbon tableau. If the column word of $T$ is latticed then the row-number
tableau of $T$ is latticed.
\end{proposition}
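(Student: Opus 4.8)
The plan is to compare the two reading orders on the boxes of the tableau-of-shape-$\nu/\tau$ picture of $T$ described before Theorem~\ref{thm:CL}. Write $\mathrm{cw}(T)$ for the column word of $T$ and $\w(\RNT(T))$ for the word of the row-number tableau, read as usual (rows right-to-left, bottom row first). Both words have the same content $\lambda$, since each $r$-border strip of $T$ contributes exactly one letter equal to its label to each word. The key point will be to show that these two words have the \emph{same} set of $k$-unpaired entries for every $k$, or at least that $\mathrm{cw}(T)$ being latticed forces $\w(\RNT(T))$ to be latticed; the former is the cleaner target and is what I would aim for.

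First I would set up notation: fix $k$, and in both words replace each $k$ by $)$ and each $k+1$ by $($ as in \S\ref{sec:LS}. An $r$-border strip of $T$ with label $j\in\{k,k+1\}$ occupies its leftmost column at some row $a$, namely its row number, and the column word records it when that leftmost column is read from the bottom, while $\RNT(T)$ places the letter $j$ in row $a$. So there is a content-preserving bijection between letters $j\in\{k,k+1\}$ of $\mathrm{cw}(T)$ and of $\w(\RNT(T))$ that preserves the row number $a$. The crucial structural fact I would establish is that, restricted to letters in $\{k,k+1\}$, this bijection is compatible with the two linear orders up to a ``stable sort by row number''. Concretely: in $\mathrm{cw}(T)$ the $k$- and $(k+1)$-border strips appear in an order that refines ``increasing column of leftmost box, then bottom-to-top within a column,'' whereas in $\w(\RNT(T))$ they appear grouped by row number, rows read from largest to smallest. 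I would argue that because distinct $r$-border strips carrying labels $k$ or $k+1$ are disjoint and each is a single connected strip, two such strips with the same leftmost column cannot both have a box in that column unless they lie in different rows, and the bottom-to-top column reading of $\mathrm{cw}(T)$ then visits them in decreasing order of row number — matching the within-block order of $\w(\RNT(T))$. What differs is only the relative order of strips in different leftmost columns, and I would show this reordering only moves a $k$ past a $k+1$ when both are already $k$-paired (because a $k+1$ strip strictly northeast of a $k$ strip forces a pairing), hence the unpaired subwords — which by Lemma~\ref{lemma:unpairedsubword} have the form $k^c(k+1)^d$ — coincide in position type. In particular $d=0$ for $\mathrm{cw}(T)$ iff $d=0$ for $\w(\RNT(T))$.

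I would then conclude: if $\mathrm{cw}(T)$ is latticed, then for every $k$ its $k$-unpaired subword is $k^c$ with no $(k+1)$, and by the matching above the same holds for $\w(\RNT(T))$, so $\RNT(T)$ is latticed. The main obstacle I anticipate is the bookkeeping in the middle paragraph: making precise the claim that the only discrepancy between the column reading of $T$ and the row-number reading of $\RNT(T)$ is a sequence of transpositions each of which swaps an already-$k$-paired pair. This will require a careful look at the geometry of horizontal $r$-ribbon strips — using Lemma~\ref{lemma:ribboneqv}, especially part (iv), to control how the constituent $r$-border strips of a single label $j$ sit relative to one another (at most one per column of each quotient component, translating to the horizontal-strip condition on shapes), and then analysing how a $k$-strip and a $(k+1)$-strip can interleave. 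I would handle the two cases (both strips from the same label's horizontal ribbon strip; strips from the two different labels $k$, $k+1$) separately, the second being the substantive one, since that is where the pairing-forced-by-northeast-position argument lives.

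If the full ``same unpaired set'' claim proves too delicate, I would fall back to the weaker one-directional statement that suffices here: track only the first time a $(k+1)$ becomes $k$-unpaired. In $\w(\RNT(T))$ a $(k+1)$ is $k$-unpaired iff, reading right-to-left, it sets a new record for the excess of $(k+1)$s over $k$s; I would exhibit, from such a witnessing $(k+1)$ in $\RNT(T)$, a corresponding $(k+1)$-strip in $T$ whose leftmost column lies weakly left of every $k$-strip that could pair with it, producing a $k$-unpaired $(k+1)$ in $\mathrm{cw}(T)$ and hence contradicting the hypothesis. Either way the statement follows; I expect the clean version to go through, and it is also what is needed to make Conjecture~\ref{conj:RNT} plausible.
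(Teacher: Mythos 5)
Your overall strategy --- restrict both words to the letters $k$ and $k+1$, match the letters via the border strips, and argue that the change of reading order cannot create an unpaired $k+1$ --- is the same as the paper's, but the specific claim you build the main argument on is false. You aim to show that the column word and the word of $\RNT(T)$ have the \emph{same} $k$-unpaired entries for every $k$, via the assertion that the reordering ``only moves a $k$ past a $k+1$ when both are already $k$-paired.'' This would prove the converse of Proposition~\ref{prop:RNTCW} as well, and the paper exhibits a counterexample immediately after the proposition: for the shape $(3,3,3,3)$ and weight $(2,2)$ there is a $3$-ribbon tableau with column word $2112$ (not latticed, with a $1$-unpaired $2$) whose row-number tableau has word $2211$ (latticed). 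So the unpaired sets need not agree, and the mechanism you propose --- every transposition between the two orders swaps a paired pair --- cannot be right in general. Your parenthetical justification (``a $k+1$ strip strictly northeast of a $k$ strip forces a pairing'') also targets the wrong configuration: the discrepancies between the two orders come from a $k$-strip whose leftmost column is weakly to the left of a $(k+1)$-strip but whose row number is strictly \emph{smaller}, i.e.\ the $(k+1)$-strip starts lower down; this is exactly the ``inversion'' situation analysed in the paper, and in it the letters involved may well be unpaired.

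The correct resolution is inherently one-directional, and it is the content of the paper's proof: ordering the $r$-border strips $D_1,\ldots,D_q$ with labels in $\{k,k+1\}$ as in the column word, one shows using~\eqref{eq:rownumbers} that equal labels have weakly decreasing row numbers and that a $k+1$ followed by a $k$ has strictly larger row number; hence the $\RNT$-subword is obtained from the column-word subword by sorting certain consecutive blocks into decreasing order, i.e.\ by moving $(k+1)$s leftward past $k$s. Such a sort can destroy but never create a $k$-unpaired $k+1$, which gives the proposition (and only the stated direction). Your fallback paragraph points in this direction but stays at the level of a sketch: the assertion that a violating $k+1$ in $\RNT(T)$ yields a $(k+1)$-strip lying weakly left of every $k$-strip ``that could pair with it'' is precisely the ordering comparison that has to be proved, and you give no argument for it; without the inversion/sorting analysis (or an equivalent), the proof is not complete.
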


The proof is given in the appendix. %Rows $1$ and $2$ of the table overleaf show that
The converse of Proposition~\ref{prop:RNTCW} is false.
For example $\langle s_{(2,2)} \circ p_3, s_{(3,3,3,3)} \rangle = 1$. 
The two $3$-ribbon tableau in $\rRT\bigl( (3,3,3,3), (2,2) \bigr)$ are shown below.
Both have a latticed
row-number tableau, with word $2211$. The column words are $2112$ and $2121$ respectively;
only the second is latticed.

\begin{center}
\includegraphics{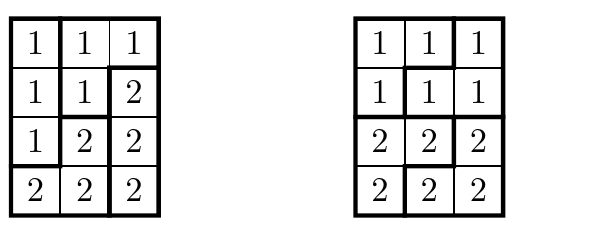}
\end{center}

In both Theorem~\ref{thm:CL} and Corollary~\ref{cor:EPW}
there is a lattice condition that refers directly to certain sets of
$r$-ribbon tableaux, without making use of $r$-quotients. In the following
problem, which the author believes is open under its intended interpretation (except when $r = 2$
or $\lambda = (a,1^b)$ for some $a \in \N$ and $b \in \N_0$) we say that such conditions are \emph{global}.

\begin{problem}\label{prob:gen}
Find a combinatorial rule, simultaneously generalizing
Theorem~\ref{thm:CL} and Corollary~\ref{cor:EPW},
that expresses 
$\langle s_\tau(s_\lambda \circ p_r), s_\nu \rangle$ as the product of $\sgn_r(\nu/\tau)$ and  
the size of a set of $r$-ribbon tableaux of shape $\nu/\tau$ satisfying
a global lattice condition.
\end{problem}
	
The obvious generalizations of Theorem~\ref{thm:CL} and Corollary~\ref{cor:EPW} fail even
to give correct upper and lower bounds on the multiplicity in Problem~\ref{prob:gen}.
Counterexamples are shown in the table below.
The second column gives the number of $r$-ribbon tableaux of the
relevant shape and weight and the final
two columns count those $r$-ribbon tableaux 
whose column word is latticed (CWL), and whose row-number tableaux is latticed (RNTL), respectively.

\medskip
\begin{center}
\begin{tabular}{cccc}\toprule 
plethysm $\langle s_\tau(s_\lambda \circ p_r), s_\nu \rangle$ 
& $|r$-$\RT(\nu/\tau,\lambda)|$ & CWL & RNTL  \\ \midrule
%$\langle s_{(2,2)} \circ p_3, s_{(3,3,3,3)} \rangle = 1$ & 2 & 1 & 2 \\
$\langle s_{(3,3)} \circ p_3, s_{(6,6,6)} \rangle = 1$ & 6 & 0 & 2 \\
$\langle s_{(2,2,2)} \circ p_4, s_{(7,4,4,4,4,1)} \rangle = -1$ & 9 & 0 & 0 \\
$\langle s_{(1)}(s_{(3,3)} \circ p_3), s_{(6,6,6,1)} \rangle = 1$ & 6 & 0 & 0  \\
$\langle s_{(1)}(s_{(2,2)} \circ p_4), s_{(5,4,4,4)} \rangle = 1$ & 2 & 2 & 2 
\\ \bottomrule \end{tabular}
\end{center}

\medskip

Despite this, there are some signs that row-number tableaux are a useful object in more
general settings than Corollary~\ref{cor:EPW}. In particular, 
the following conjecture holds when $r \le 4$ and $n \le 10$ and when $r \le 6$ and $n \le 6$.
(Haskell \cite{Haskell98} source code to verify this claim % and the table above
is available from the author.)
When $r=2$ it holds by Theorem~\ref{thm:CL} and Proposition~\ref{prop:RNTCW},
replacing $(a,b)$ with a general partition $\lambda$; by row $2$ of the table
above, this more general conjecture is false when $r=4$. By Corollary~\ref{cor:EPW},
the conjecture holds, with equality, when $b=1$.
%When $r=3$, the more
%general conjecture holds whenever $|\lambda| \le 8$.

\begin{conjecture}\label{conj:RNT}
Let $r \in \N$, let $n \in \N_0$, let $\nu$ be a partition of $rn$ and let $(a,b)$ be a partition of $n$.
The number of $r$-ribbon tableaux~$\,T$ of shape $\nu$ and weight~$(a,b)$ such
that the row-number tableau $\RNT(T)$ is latticed is an upper bound for the absolute
value of $\langle s_{(a,b)} \circ p_r, s_\nu \rangle$.
\end{conjecture}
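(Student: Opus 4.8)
The plan is to reduce Conjecture~\ref{conj:RNT} to a statement about the sign-reversing involution $G$ of \S\ref{sec:LS} acting on the relevant set of skew $r$-multitableaux. Fix $\nu$ and the partition $(a,b)$ of $n$, and write $\nub$ for the $r$-quotient of $\nu$. By Theorem~\ref{thm:main} (with $\tau = \mu = \varnothing$), the absolute value of $\langle s_{(a,b)} \circ p_r, s_\nu \rangle$ equals $c^{(a,b)}_{\nub} = |\mSSYTL(\nub,(a,b))|$, the number of latticed semistandard skew $r$-multitableaux of shape $\nub$ and content $(a,b)$. The proof of~\eqref{eq:mSSYT} gives a bijection $\rRT(\nu,(a,b)) \to \mSSYT(\nub,(a,b))$ sending an $r$-ribbon tableau $T$ to an $r$-multitableau $\mathbf{t}$; the first step is therefore to understand how $\RNT(T)$ relates to $\mathbf{t}$. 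Under this bijection the border strip of row number $a$ contributing the $i$-labelled box in runner-$i'$ tableau $t(i')$ corresponds, on the abacus, to a bead move whose row number is controlled by the bead occupancy above it; tracking this shows that the word of $\RNT(T)$ is a specific shuffle of the words $\w(t(0)), \ldots, \w(t(r-1))$ --- indeed, reading $\RNT(T)$ by rows from the bottom records the labels in exactly the order in which the corresponding downward bead moves occur when the moves are performed in weakly-increasing-row-number order across all runners simultaneously.

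With this identification in hand, the key observation is that the latticeness of $\RNT(T)$ is governed by the same parenthesis-matching on the same multiset of entries as the latticeness of $\mathbf{t}$, but with respect to a possibly different linear order on the boxes. Since the content is a partition with only two parts $(a,b)$, the only relevant pairing is the $1$-pairing: $\mathbf{t}$ is latticed iff reading $\w(\mathbf{t})$ left to right the number of $1$s is always at least the number of $2$s, and likewise $\RNT(T)$ is latticed iff the analogous count holds for $\w(\RNT(T))$. The plan is to show that $\w(\RNT(T))$ is obtained from $\w(\mathbf{t})$ by a sequence of transpositions of adjacent entries each of which either swaps two equal entries or moves a $1$ leftwards past a $2$; any such move can only increase the running excess of $1$s over $2$s, so if $\w(\mathbf{t})$ is latticed then so is $\w(\RNT(T))$. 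This would give an injection (in fact, a restriction of the identity on $r$-ribbon tableaux) from the set counted by $c^{(a,b)}_{\nub}$ into the set of $T$ with $\RNT(T)$ latticed, proving the bound. I would formalise the comparison of the two orderings via Lemma~\ref{lemma:ribboneqv}(ii): within a single runner the bead moves $\beta_1 < \gamma_1 < \cdots$ already occur in decreasing row-number order by the proof of Lemma~\ref{lemma:ribboneqv}, which matches the row-reading of a single-runner tableau; the subtlety is only in how moves on different runners interleave, and here one checks that whenever the $\RNT$-order and the $\w(\mathbf{t})$-order disagree on an adjacent pair, the $\RNT$-order is the one that (weakly) puts the smaller label, equivalently the $1$, first.

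The main obstacle I anticipate is precisely this interleaving analysis: proving that the discrepancy between the row-reading order of $\RNT(T)$ and the concatenation order $\w(t(0))\cdots\w(t(r-1))$ of $\mathbf{t}$ can always be resolved by adjacent transpositions of the benign type (equal entries, or $1$ before $2$). This is not automatic --- a priori a $2$ from an early runner could be forced before a $1$ from a later runner in $\w(\mathbf{t})$ while appearing after it in $\RNT(T)$, which is the \emph{good} direction, but one must rule out the reverse. The resolution should come from the fact that in $\RNT(T)$, which reads by increasing row number globally, a box labelled $2$ sits in a strictly later $r$-border strip than any box labelled $1$ in the same or a higher row, together with the semistandardness of each $t(i')$; I would make this precise by an induction on the number of $r$-border strips, peeling off the last (bottom-most row number) strip and using that in a two-part content the last strip added is the one realising the current maximum row number. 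I should note that the hypothesis $\tau = \varnothing$ (straight shape $\nu$) and the restriction to two-part weights are both used here --- for skew shapes or longer weights the benign-transposition property can fail, consistent with the counterexamples in the table above --- so the argument genuinely does not extend, which is why only the conjectural upper bound, not an equality, is claimed.
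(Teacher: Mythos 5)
First, note that the statement you are proving is stated in the paper as a \emph{conjecture}: the paper offers no proof, only computational verification for small $r$ and $n$ and the two special cases $r=2$ (via Theorem~\ref{thm:CL} and Proposition~\ref{prop:RNTCW}) and $b=1$ (via Corollary~\ref{cor:EPW}). So any complete argument here would be new. Your proposal does not supply one: the entire content is concentrated in the claim that, for straight shape $\nu$ and two-part weight, the word of $\RNT(T)$ is obtained from $\w(\mathbf{t})$ (where $\mathbf{t}$ is the image of $T$ under the quotient bijection of~\eqref{eq:mSSYT}) by adjacent transpositions that only swap equal entries or move a $1$ leftwards past a $2$. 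You state this, acknowledge it as ``the main obstacle,'' and sketch only a hoped-for induction; nothing in the proposal establishes it. That claim is in fact \emph{stronger} than the conjecture (it would show that the bijection carries $\mSSYTL(\nub,(a,b))$ into the set of $T$ with $\RNT(T)$ latticed, rather than merely bounding cardinalities), so the gap is not a technical loose end but the whole problem.

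Concretely, two points in your sketch are unsupported. First, the analogous implication ``$\mathbf{t}$ latticed $\Rightarrow$ $\RNT(T)$ latticed'' is false in general: in the paper's own example in \S\ref{sec:example} (skew shape $(6,5,5,5,2)/(3,2)$, weight $(3,3)$), the latticed multitableau $\mathbf{t}_1$ corresponds to the first ribbon tableau of Figure~3, whose row-number tableau has word $112221$, which is not latticed. You correctly note that your argument must therefore use $\tau=\varnothing$ essentially, but you give no mechanism by which the straight-shape hypothesis repairs the interleaving analysis. Second, your control of the cross-runner interleaving via Lemma~\ref{lemma:ribboneqv}(ii) is not sufficient: the row number attached to a single-step bead move is one more than the number of beads in \emph{all} positions more than $r$ below it, i.e.\ it depends on the occupancy of every runner, not just the runner being moved, and it changes as earlier moves are performed. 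Hence the relative order in $\w(\RNT(T))$ of a box on an early runner and a box on a later runner is not determined runner-by-runner, and the possibility that a $2$ from a later runner is forced \emph{before} a $1$ from an earlier runner in $\w(\RNT(T))$ (the bad direction) is exactly what must be excluded and is not. Until that step is proved, the proposal is a plausible strategy rather than a proof; it would be more accurately presented as a reduction of Conjecture~\ref{conj:RNT} to the (open) compatibility statement between the quotient bijection and row-number tableaux for two-part weights and straight shapes.
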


\section*{Appendix: the shape-content involution and proof of Proposition~\ref{prop:RNTCW}}
\setcounter{section}{7}

In the proof of~\eqref{eq:lrSkewSkew} we used the following proposition.
\setcounter{theorem}{0}

\begin{proposition}\label{prop:shapecontent}
If $\lambda$, $\mu$ and $\nu$ are partitions then 
\[ \bigl| \bigl\{ t \in \SSYT(\nu, \lambda - \mu) : \text{$\bigl( t,u(\mu) \bigr)$ is latticed} \bigr\} \bigr| = \bigl|\SSYTL(\lambda/\mu, \nu)\bigr|. \]
%is equal to the number of latticed semistandard tableaux of shape $\lambda / \mu$.
\end{proposition}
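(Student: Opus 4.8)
The plan is to translate the statement into the multitableau language of \S\ref{sec:LS} and then exhibit an explicit bijection. First I would observe, exactly as in the argument given for~\eqref{eq:finalFixed}, that $u(\mu)$ is the unique latticed semistandard tableau of shape $\mu$, so that any latticed multitableau whose final component has shape $\mu$ must have $u(\mu)$ as that component. Hence $t \mapsto \bigl( t, u(\mu) \bigr)$ identifies the set on the left with $\mSSYTL\bigl( (\nu,\mu), \lambda \bigr)$, whose cardinality is $c^\lambda_{(\nu,\mu)}$ by definition, while the right-hand side is $\bigl| \SSYTL(\lambda/\mu,\nu) \bigr| = \bigl| \mSSYTL\bigl( (\lambda/\mu), \nu \bigr) \bigr| = c^\nu_{(\lambda/\mu)}$, regarding $\lambda/\mu$ as a skew $1$-multipartition. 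So it suffices to prove $c^\lambda_{(\nu,\mu)} = c^\nu_{(\lambda/\mu)}$. This is the combinatorial content of the adjointness relation $\langle s_\nu s_\mu, s_\lambda \rangle = \langle s_\nu, s_{\lambda/\mu} \rangle$; as promised in the introduction, it is a genuine input to that relation and so must be proved directly rather than deduced from it.

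The bijection I would use between $\SSYTL(\lambda/\mu,\nu)$ and $\mSSYTL\bigl( (\nu,\mu), \lambda \bigr)$ comes from jeu de taquin (see \cite[Appendix~A1]{StanleyII}). Given a latticed $T \in \SSYTL(\lambda/\mu,\nu)$, its rectification is the unique semistandard tableau of shape $\nu$ and content $\nu$, namely $u(\nu)$, because rectification preserves both content and the lattice property. Performing the $|\mu|$ jeu-de-taquin slides that clear the inner shape $\mu$, and doing so in the order prescribed by $u(\mu)$, one records at each slide the outer cell that is vacated together with the label of the slid inner cell; reorganising this recorded data (equivalently, carrying out the associated tableau switching) recovers the pair $\bigl( t, u(\mu) \bigr)$ with $t$ of shape $\nu$ and content $\lambda - \mu$. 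Reverse jeu de taquin, started from $u(\nu)$ and undoing the slides according to $u(\mu)$, inverts the construction, so this is a bijection of semistandard skew multitableaux; what remains is to check that it matches up the two lattice conditions.

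That check is the main obstacle, and it is where the machinery of \S\ref{sec:LS} earns its keep. The key fact is the coplactic invariance of jeu de taquin: a jeu-de-taquin slide commutes with the operators $E_k$ and $F_k$, which can be verified by the same kind of local case analysis on the reading word as in the proof of Lemma~\ref{lemma:coplactic}. Granting this, the bijection above intertwines the two $G$-involutions of \S\ref{sec:LS} — the one on $\bigcup_{h}\SSYT(\lambda/\mu, h \cdot \nu)$ and the one on $\bigcup_{g}\mSSYT\bigl( (\nu,\mu), g \cdot \lambda \bigr)$ — and therefore restricts to a bijection of their fixed-point sets, which by Proposition~\ref{prop:LS} are exactly $\SSYTL(\lambda/\mu,\nu)$ and $\mSSYTL\bigl( (\nu,\mu), \lambda \bigr)$. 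An alternative, purely involutive route avoids jeu de taquin: using Proposition~\ref{prop:LS} twice writes each side as an alternating sum over a symmetric group of counts of semistandard tableaux, and expanding each such count by the Jacobi--Trudi formula applied to the \emph{other} shape reduces the identity to a matching of two signed sets of straight-shape tableaux, which one then settles by a sign-reversing involution in the spirit of the skew--skew Littlewood--Richardson rule of \cite[\S4]{RemmelShimozono}; here too the coplactic maps handle the content-changing part. Either way, the bookkeeping needed to make the signs and lattice conditions correspond is the technical heart of the argument.
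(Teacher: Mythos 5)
Your reformulation of the two sides as $\bigl|\mSSYTL\bigl((\nu,\mu),\lambda\bigr)\bigr|$ and $\bigl|\SSYTL(\lambda/\mu,\nu)\bigr|$ is fine, but the argument has a genuine gap at exactly the point you yourself call ``the technical heart''. First, the bijection is never actually pinned down: switching $u(\mu)$ past a latticed $T \in \SSYTL(\lambda/\mu,\nu)$ in the way you describe (recording the vacated outer cell and the label of the slid inner cell at each slide) naturally produces the rectification $u(\nu)$ together with a skew tableau of shape $\lambda/\nu$ and content $\mu$ --- not a straight-shape tableau of shape $\nu$ and content $\lambda-\mu$. The phrase ``reorganising this recorded data'' is precisely where the desired map would have to be constructed, and no construction is given; as described, the recording would prove a different (if related) identity. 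Second, the matching of the lattice conditions is deferred rather than proved: you invoke coplactic invariance of jeu de taquin (a substantial theorem in its own right, not a routine variant of the local check in Lemma~\ref{lemma:coplactic}) and then assert that the map ``intertwines the two $G$-involutions''. That assertion is unjustified and not even clearly well-posed: the involutions of Proposition~\ref{prop:LS} act on unions over $\Sym_\ell$-orbits of two different contents ($h\cdot\nu$ on one side, $g\cdot\lambda$ on the other), while your map, which trades shape for content, has only been described on semistandard objects of one fixed content, so there is nothing yet to intertwine. The ``alternative involutive route'' in your final paragraph is sketched at the same level of detail and does not close the gap.

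For comparison, the paper's proof is elementary and avoids plactic machinery entirely: it applies the shape/content involution $\CC$ of Lemma~\ref{lemma:shapecontent} (place a $k$ in row $a$ of $\CC(t)$ for each entry $a$ in row $k$ of $t$) with $\alpha=\nu$ and $\beta=\varnothing$, and a short direct argument on the reading word shows that semistandardness of $t$ corresponds exactly to the lattice property on the other side, which gives the stated equality at once. Your jeu-de-taquin strategy could in principle be completed, but it imports exactly the Littlewood--Richardson-level machinery the paper is at pains not to assume, and the completion is the part you have omitted.
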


This proposition follows immediately from Lemma~\ref{lemma:shapecontent}(iii) below, 
by setting $\alpha = \nu$ and $\beta=
\varnothing$. The `shape-content involution' given in this 
lemma is surely well known to experts, but the author has not found it
in the literature in this generality. The lemma may also be used to show that the final
corollary in Stembridge's involutive proof of the Littlewood--Richardson rule \cite{StembridgeLR}
is equivalent to~\eqref{eq:lrProd}; this is left as a `not-too-difficult exercise' in \cite{StembridgeLR}.

%\subsubsection*{The shape-content involution}
Let $\lambda/\mu$ and $\alpha/\beta$ be skew partitions of the same size. Let
$\RSYT(\lambda/\mu,\alpha/\beta)$ be the set of all row-standard $\lambda/\mu$ tableaux $t$
such that $\beta + \cont(t) = \alpha$. Let
\begin{align*} \RSYTL(\lambda/\mu, \alpha/\beta) &= \bigl\{ t \in \RSYT(\lambda/\mu, \alpha/\beta) : 
\text{$\bigl( t, u(\beta) \bigr)$ is latticed} \bigr\}, \\
\SSYTL(\lambda/\mu, \alpha/\beta) &= \RSYTL(\lambda/\mu,\alpha/\beta) \cap \SSYT(\lambda/\mu,\alpha/\beta). 
%\{ t \in \RSYTL(\lambda/\mu, \alpha/\beta) : 
%\text{$\bigl( t, u(\beta) \bigr)$ is latticed and semistandard} \}
\end{align*}
%(This is consistent with the definition of latticed semistandard multitableaux in \S\ref{sec:LS},
%since if  $\mu=\beta = \varnothing$ then $t \in \SSYTL(\lambda/\mu, \alpha)$ if and only
%if the $1$-multitableau $(t)$ is in $\mSSYTL\bigl( (\lambda), \alpha\bigr)$.)
Given $t \in \RSYT(\lambda/\mu,\alpha/\beta)$, let $\CC(t)$ be the
row-standard tableau of shape $\alpha/\beta$ defined
by putting a $k$ in row $a$ of $\CC(t)$ for every $a$ in row $k$ of~$t$. 

\bigskip
%\samepage{
\begin{lemma}[Shape/content involution]{ \ }\label{lemma:shapecontent}
\begin{thmlist}
\item $\CC : \RSYT(\lambda/\mu,\alpha/\beta) \rightarrow \RSYT(\alpha/\beta,\lambda/\mu)$ is an involution.
\item $\CC$ restricts to a involution $\SSYT(\lambda/\mu,\alpha/\beta) \rightarrow \RSYTL(\alpha/\beta,
\lambda/\mu)$.
\item $\CC$ restricts to a involution $\SSYTL(\lambda/\mu,\alpha/\beta) \rightarrow \SSYTL(\alpha/\beta,
\lambda/\mu)$.
%\[ \SSYT(\lambda/\mu,\alpha/\beta) \rightarrow 
%\bigl\{t \in \RSYT(\alpha/\beta,\lambda/\mu) : \text{$\bigl(t, u(\mu) \bigr)$ is latticed} \bigr\}. \]
%\item $C$ restricts further to a bijection 
%\[ \bigl\{t \in \RSYT(\lambda/\mu,\alpha/\beta) :\text{$\bigl(t, u(\beta) \bigr)$ is latticed} \bigr\}
%\rightarrow
%\bigl\{t \in \RSYT(\alpha/\beta,\lambda/\mu) : \text{$\bigl(t, u(\mu) \bigr)$ is latticed} \bigr\}. \]
\end{thmlist}
\end{lemma}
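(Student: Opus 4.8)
The plan is to prove (i) by direct inspection, reduce (ii) to a single biconditional valid for an arbitrary row-standard skew tableau, and deduce (iii) formally; throughout, the guiding idea is that $\CC$ transposes a biword.

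For (i): encode $t\in\RSYT(\lambda/\mu,\alpha/\beta)$ by the multiset of pairs $(r,v)$, each taken with multiplicity the number of entries equal to $v$ in row $r$ of $t$. Since $t$ is row-standard this multiset recovers $t$ (arrange each row weakly increasingly), and $\beta+\cont(t)=\alpha$ says exactly $\alpha_v-\beta_v$ pairs have second coordinate $v$. By its definition $\CC(t)$ is the tableau with the coordinate-swapped biword; hence $\CC(t)$ has shape $\alpha/\beta$ and, since row $r$ of $t$ has $\lambda_r-\mu_r$ boxes, satisfies $\mu+\cont(\CC(t))=\lambda$, so $\CC(t)\in\RSYT(\alpha/\beta,\lambda/\mu)$. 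Transposing a biword twice does nothing, so $\CC^2=\id$, which is (i).

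The substance is the following claim, to be proved for an arbitrary row-standard skew tableau $t$ of shape $P/Q$, for which the pair $(\CC(t),u(Q))$ always has the partition content $P$ so that latticedness is meaningful: \emph{$t$ is semistandard if and only if $(\CC(t),u(Q))$ is latticed}. Granting it, (ii) follows from the claim with $P/Q=\lambda/\mu$, applied to $t$ for the inclusion $\CC\bigl(\SSYT(\lambda/\mu,\alpha/\beta)\bigr)\subseteq\RSYTL(\alpha/\beta,\lambda/\mu)$ and to $\CC(s)$ for the reverse inclusion, using $\CC^2=\id$; and (iii) follows by using the claim with $P/Q=\lambda/\mu$ and with $P/Q=\alpha/\beta$: for $t\in\SSYTL(\lambda/\mu,\alpha/\beta)$ the first gives $(\CC(t),u(\mu))$ latticed, while the second applied to $\CC(t)$ shows that $(t,u(\beta))$ being latticed is equivalent to $\CC(t)$ being semistandard, so $\CC(t)\in\SSYTL(\alpha/\beta,\lambda/\mu)$; the reverse inclusion is symmetric, and $\CC^2=\id$ supplies the involution in each part.

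To prove the claim, unwind the word $w=\w(\CC(t))\,\w(u(Q))$: reading $\w(\CC(t))$ lists the row-indices of the boxes of $t$, ordered by decreasing value and, among boxes of equal value, by increasing row-index. Since $u(Q)$ is latticed, $w$ is latticed if and only if for every suffix $w'$ of $\w(\CC(t))$ and every $i$ the number of $(i+1)$'s in $w'$ plus $Q_{i+1}$ is at most the number of $i$'s in $w'$ plus $Q_i$. Letting $g_i(v)$ denote the rightmost column of row $i$ of $t$ with entry at most $v$, and $g_i(v)=Q_i$ if there is none, a short case analysis over the binding suffixes — those ending between two whole rows of $\CC(t)$, which yield $g_{i+1}(v)\le g_i(v)$, and those beginning at the start of a block of equal row-indices inside a row of $\CC(t)$, which yield $g_{i+1}(v)\le g_i(v-1)$, every remaining suffix giving a weaker inequality — shows $w$ is latticed if and only if $g_{i+1}(v)\le g_i(v)$ for all $i,v$ and $g_{i+1}(v)\le g_i(v-1)$ for all $i$ and $v\ge1$. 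The first condition says the boxes of $t$ with entry at most $v$ form $\lambda^{(v)}/Q$ for a partition $\lambda^{(v)}$, and the second that $\lambda^{(v)}/\lambda^{(v-1)}$ has at most one box per column; by the standard description of skew semistandard tableaux as chains of partitions with horizontal-strip quotients \cite[Chapter 7]{StanleyII}, together with the easy remark that for a row-standard filling these two conditions are jointly equivalent to strict increase down columns, this holds precisely when $t$ is semistandard. I expect this suffix bookkeeping to be the only genuinely delicate point; part (i) and the reductions of (ii) and (iii) are routine once the biword viewpoint is in place.
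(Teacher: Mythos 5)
Your proposal is correct, and its skeleton matches the paper's: part (i) is the biword transpose, the heart is the single biconditional relating semistandardness on one side of $\CC$ to latticedness (with the $u(Q)$ offsets) on the other, and (ii), (iii) are then formal consequences via $\CC^2=\id$ — your claim is just the mirror image of the one the paper proves ($t$ semistandard iff $(\CC(t),u(\mu))$ latticed, versus $(t,u(\beta))$ latticed iff $\CC(t)$ semistandard), and the two are interchangeable by applying $\CC$. Where you genuinely diverge is in how the biconditional is verified. The paper argues locally: it pinpoints the first $k$-unpaired $k+1$, records the counting identity it satisfies, and exhibits from it a specific pair of vertically adjacent entries of $\CC(t)$ violating column-strictness, with the converse ``proved by reversing this argument''. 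You instead reformulate both conditions globally: latticedness of $\w(\CC(t))\w(u(Q))$ reduces, via the binding suffixes (whole-row cuts and cuts at the start of an $(i+1)$-block), to the inequalities $g_{i+1}(v)\le g_i(v-1)$ (your first family $g_{i+1}(v)\le g_i(v)$ is in fact redundant, being implied by the second since $g_i$ is weakly increasing in $v$), and these are exactly the horizontal-strip chain characterization of semistandardness. This buys a symmetric, two-sided equivalence with no ``reverse the argument'' step, at the price of the suffix bookkeeping; the paper's version is shorter and makes the offending configuration explicit. Your suffix analysis and the translation to the chain-of-partitions description both check out, and your deductions of (ii) (applying the claim to $t$ and to $\CC(s)$) and of (iii) (applying it in both shapes) are sound, so I see no gap — only note that the claim should really be stated for $t$ whose content makes $\CC(t)$ a genuine skew tableau, as it is in every application, or else $\w(\CC(t))$ must be read simply as the list of rows of $\CC(t)$ from bottom to top.
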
 %}

\begin{proof}
(i) is obvious. For (ii) observe that if $t \in \RSYTL(\lambda/\mu, \alpha/\beta)$
then $\bigl( t, u(\beta) \bigr)$ is not latticed
if and only if there exists $k \in \N$ and
an entry $k+1$ in row $a$ of $t$ and position $i$ of $\w(t)$ such that
\[ \big| \{j : \w(t)_j = k+1, j \ge i \} \bigr| + \beta_{k+1} 
= \bigl| \{j : \w(t)_j = k : j > i \} \bigr| + \beta_k + 1. \]
Let $b$ be the common value.
The first $b-1-\beta_k$ entries in row $k$ of $\CC(t)$ are at most $a-1$,
and the next entry is the number of a row $a'$ with $a' \ge a$.
The entry below is the $(b-\beta_{k+1})$-th entry in row $k+1$ of $\CC(t+1)$, namely~$a$.
Therefore $\CC(t)$ is not semistandard. The converse may be proved by reversing this argument.
It follows from (ii) that~$\CC$ restricts to  involutions
$\SSYT(\lambda/\mu,\alpha/\beta) \rightarrow \RSYTL(\alpha/\beta,
\lambda/\mu)$ and 
$\RSYTL(\lambda/\mu,\alpha/\beta) \rightarrow \SSYT(\alpha/\beta, \lambda/\mu)$;
taking the common domain and codomain of these involutions we get~(iii).
\end{proof}

We end with the proof of Proposition~\ref{prop:RNTCW}. One final definition will be useful.
Let $D$ be a subset of the boxes of a Young diagram of a partition $\nu$.
 If column $b$ is the least
numbered column of $\nu$ meeting $D$, 
then we say that $D$ has \emph{column number} $b$,
and write $C(D) = b$. (Thus if $D$ is a border strip in $\nu$ then
$D$ has column number $b$ if and only if the conjugate border strip $D'$ in $\nu'$ has row number~$b$.)
For an example see Figure~5 overleaf.

\begin{proof}[Proof of Proposition~\ref{prop:RNTCW}]
Suppose that the labels of the $r$-ribbons in $T$ are $\{1,\ldots, \ell\}$. Fix \hbox{$k < \ell$}.
Let $D_1, \ldots, D_q$ be the subsets of the Young diagram of $\nu/\tau$ that form
the $r$-border strips lying in the $r$-ribbon strips of $T$
labelled~$k$ and $k+1$, written in the order corresponding to the column word of $T$. Thus
\[ C(D_1) \le \ldots \le C(D_q) \]
and if $C(D_j) = C(D_{j+1})$ then $R(D_j) > R(D_{j+1})$. Let $N(D_j) \in \{k,k+1\}$ be
the label of $D_j$. Let
\[  w = N(D_1) N(D_2) \ldots N(D_q) \]
be the subword of the column word of $T$ formed from the entries $k$ and $k+1$.
By hypothesis, $w$ has no $k$-unpaired $k+1$.

Let $v$ be the subword 
of the word of the row-number tableau $\RNT(T)$ formed from the entries $k$ and $k+1$.
We may obtain $v$ by reading the rows
of~$T$ from left to right, starting at the highest numbered row, and writing down
the label $N(D_j)$ of $D_j$ on the final occasion when we see a box of $D_j$.
By~\eqref{eq:rownumbers}, if $N(D_j) = N(D_{j+1})$ then $R(D_j) \ge R(D_{j+1})$. 
Moreover, if $N(D_j) = k+1$ and $N(D_{j+1}) = k$ then $R(D_j) > R(D_{j+1})$. 
Therefore
$N(D_j)$ is written after $N(D_{j+1})$ when writing $v$ if and only
if $N(D_j) = k$, $N(D_{j+1}) = k+1$ and $R(D_j) < R(D_{j+1})$. We say that such $j$
are \emph{inversions}. If there are no inversions,
then $v$ and $w$ are equal. Otherwise,
let~$j$ be minimal such that $j$ is an inversion,
and let~$s$ be maximal such that $R(D_j) < R(D_{j+s})$; note that $N(D_{j+s}) = k+1$,
by~\eqref{eq:rownumbers}. The word $v$ is  obtained from $w$
sorting its entries in positions $j, j+1, \ldots, j+s$ into decreasing order,
and then continuing inductively with the later positions. It is clear 
that this procedure does not create a new $k$-unpaired $k+1$. Hence $v$ has no $k$-unpaired $k+1$.
\end{proof}

\begin{figure}
\begin{center}
\includegraphics{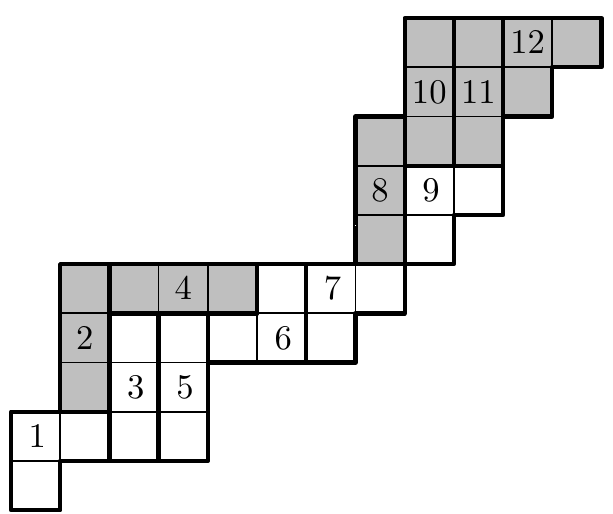}
\end{center}
\caption{Border strips $D_1,\ldots, D_{12}$ labelled $k$ (grey) 
or $k+1$ (white) forming the $3$-ribbons
in a $3$-ribbon tableau $T$ are shown. Numbers
are as in the proof of Proposition~\ref{prop:RNTCW}.
For example, $R(D_9) = 4$ and $C(D_9)= 9$.
The subword of the column word with entries $k$ and $k+1$ 
is $k^+ k k^+ k k^+ k^+ k^+ k k^+ k k k$, where $k^+$ denotes $k+1$. 
The inversions are $2$, $4$ and $8$.
%$j$ such that $N(D_j) = k$ and $N(D_{j+1}) = k+1$ and $R(D_j) < R(D_{j+1})$
% are $2, 4, 8$.
%as defined in the proof
%of Proposition~\ref{prop:RNTCW}. 
The subword of the row word of the row-number tableau of~$T$ with entries $k$ and $k^+$
is obtained by sorting the entries in positions $2,3,4,5$ and $8,9$ into decreasing order,
giving $k^+ k^+ k^+ k k k^+ k^+ k^+ kkkk$.}
\end{figure}

\def\cprime{$'$} \def\Dbar{\leavevmode\lower.6ex\hbox to 0pt{\hskip-.23ex
  \accent"16\hss}D} \def\cprime{$'$}
\providecommand{\bysame}{\leavevmode\hbox to3em{\hrulefill}\thinspace}
\renewcommand{\MR}[1]{\relax}

\end{document}